\let\emptyset\varnothing
\newtheorem{theorem}{Theorem}[section]
\newtheorem{lemma}[theorem]{Lemma}
\newtheorem{corollary}[theorem]{Corollary}
\newtheorem{proposition}[theorem]{Proposition}
\theoremstyle{definition}
\newtheorem{definition}[theorem]{Definition}
\newtheorem{example}[theorem]{Example}
\newtheorem{condition}{Condition}
\newcommand{\cI}{\mathcal{I}}
\newcommand{\mR}{\mathcal{R}}
\newcommand{\cV}{\mathcal{V}}
\newcommand{\kR}{\mathfrak{R}}
\newcommand{\inc}{\mathrm{inc}}
\newcommand{\out}{\mathrm{out}}
\newcommand{\ter}{\mathrm{ter}}
\newcommand{\1}{\mathbf{1}}
\newcommand{\R}{\mathbb{R}}
\newcommand{\N}{\mathbb{N}}
\newcommand{\Z}{\mathbb{Z}}
\newcommand{\cC}{\mathcal{C}}
\newcommand{\cR}{\mathcal{R}}
\newcommand{\cS}{\mathcal{S}}
\newcommand{\cH}{\mathcal{H}}
\newcommand{\cX}{\mathcal{X}}
\newcommand{\cY}{\mathcal{Y}}
\newcommand{\cZ}{\mathcal{Z}}
\newcommand{\cU}{\mathcal{U}}
\newcommand{\cE}{\mathcal{E}}
\newcommand{\cN}{\mathcal{N}}
\newcommand{\cG}{\mathcal{G}}
\newcommand{\cD}{\mathcal{D}}
\newcommand{\reac}{\mathrm{reac}}
\newcommand{\prdt}{\mathrm{prod}}
\newcommand{\init}{\mathrm{init}}
\newcommand{\supp}{\mathrm{supp}}
\newcommand{\pro}{\mathrm{pro}}
\numberwithin{equation}{section}
\begin{document}

\title[Asymptotic of stationary distributions]{Asymptotic analysis for stationary distributions of scaled reaction networks}
\hfill\break

\author{
Linard Hoessly, Carsten Wiuf and Panqiu Xia}
\address{Data Center of Swiss Transplant Cohort Study, University hospital Basel, Switzerland}
\email{\href{mailto:linard.hoessly@hotmail.com}{linard.hoessly@hotmail.com},}

\address{Department of Mathematical Sciences, University of Copenhagen, Denmark}
\email{\href{mailto:wiuf@math.ku.dk}{wiuf@math.ku.dk}}
\address{Department of Mathematics and Statistics, Auburn University, USA}
\email{\href{mailto:pqxia@auburn.edu}{pqxia@auburn.edu}}
\date{}
\subjclass[2010]{}

\begin{abstract}
We study stationary distributions in the context of stochastic reaction networks. In particular, we are interested in complex balanced reaction networks and reduction of such networks by assuming a set of species (called non-interacting species) are degraded fast (and therefore essentially absent in the network), implying some reaction rates are large compared to others. Technically, we assume these reaction rates are scaled by a common parameter $N$ and let $N\to\infty$. The limiting stationary distribution as $N\to\infty$ is compared to the stationary distribution of the reduced reaction network obtained by algebraic elimination of the non-interacting species.
In general, the limiting stationary distribution might differ from the stationary distribution of the reduced reaction network. We identify various sufficient conditions for when these two distributions are the same, including when the reaction network is detailed balanced  and  when the set of non-interacting species consists of  intermediate species. In the latter case, the limiting stationary distribution essentially retains the form of the complex balanced distribution.   This finding is particularly surprising given that the reduced reaction network  might be non-weakly reversible and exhibit unconventional kinetics.
\end{abstract}

\keywords{Continuous-time Markov chain, stationary distribution, complex balanced, non-interacting species,   multiscale system.}

\maketitle

\section{Introduction}

The theory of reaction networks can be used to model the complex behaviour of  chemical systems \cite{anderson2015stochastic,Feinberg}. Ordinary differential equations might be utilized to describe the   evolution of species concentrations when the  molecule counts in the system are relatively large. However, when the molecule counts are low, random fluctuations become significant, and probabilistic techniques might be  employed \cite{mcquarrie}.  Here, continuous-time Markov chains (CTMCs) are commonly applied to model the stochastic  dynamics of the molecule counts.

 A first objective in the study of stochastic    reaction networks is to investigate the stationary distributions of the CTMC, which describe the long-term behaviour of the system  \cite[etc]{non-stand_1,anderson2,Cappelletti,HWX21+,hong2021derivation}. In the present paper, we explore conditional and  asymptotic properties of stationary distributions of multiscaled stochastic reaction networks.  Multiscaled stochastic reaction networks are of relatively recent interest \cite[etc]{CW16,HW2021,KK13,PP15}, and build further on the seminal work by Kurtz \cite{kurtz2}, which revealed a   correspondence between a sequence of scaled stochastic reaction networks and a (limiting) deterministic reaction network.    In particular, we focus on complex balanced stochastic reaction networks with non-interacting species (to be defined later) assuming a sequence of one-parameter scaled stochastic reaction networks. We consider the limiting stationary distribution of the one-parameter scaled stochastic reaction networks and compare it with the stationary distribution of the reaction network obtained by elimination of non-interacting species (to be defined later).
 It is well-known that the limiting stationary distribution and the stationary distribution obtained by elimination of non-interacting species might not be the same  \cite[Example 5.4]{CW16}, \cite[Example 5.8 \& Example 5.8]{HW2021}.

To motivate our work, we consider a reaction network with stochastic mass-action kinetics, 
\begin{align}\label{exp_1}
 A\ce{<=>[$x_A$][$2 x_U$]} U \ce{<=>[$2 x_U$][$3 x_B$]} B.
\end{align}
Here, $A$, $B$, and $U$ represent distinct species. $x_A$, $x_B$ and $x_U$ correspond to the molecules counts in the system of species $A$, $B$, and $U$. Additionally, the expressions $x_A$, $2x_U$, and so forth, are the intensity functions for the reactions $A \ce{->} U$, $U \ce{->} A$, etc. These expressions serve to quantify the potential occurrence or propensity of the reactions within the system.  
 The set  $\Gamma = \{x_A + x_B + x_U = T\}$, $T\in\N_0$, is an irreducible component of the CTMC. Due to a well-known result   {\cite[Theorem 4.1]{anderson2}}, this reaction network has a unique Poisson-product form stationary distribution on $\Gamma$ given by
\begin{align*}
\pi(x_A,x_B,x_U) \coloneqq M \frac{6^{x_A}2^{x_B}3^{x_U}}{x_A ! x_B ! x_U!},\qquad (x_A, x_B, x_U)\in\Gamma,
\end{align*}
where $M$ is a positive constant such that $\pi$ is a probability distribution. By  applying a scaling factor $N$ to the reaction rates of $U\ce{->}A$ and $U\ce{->}B$, respectively, we obtain a one-parameter sequence of stochastic reaction networks,
\begin{align*}
\pi_N (x_A,x_B,x_U) \coloneqq  M_N \frac{6^{x_A} 2^{x_B} (3/N)^{x_U}}{x_A! x_B! x_U!} = M_N N^{- x_U} \frac{6^{x_A}2^{x_B}3^{X_U}}{x_A! x_B! x_U!},\qquad (x_A, x_B, x_U)\in\Gamma,\quad N\in\N,
\end{align*}
where $M_N>0$ is a constant depending on $N$. Letting $N\to \infty$, one finds that $\pi_N$ converges pointwise to a probability distribution $\pi_0$ on $\Gamma$ given by
\begin{align*}
\pi_0 (x_A, x_B) \coloneqq \begin{dcases}
\frac{1}{\pi (\Gamma_0)} \pi (x_A,x_B), & (x_A, x_B)\in \Gamma_0 \coloneqq \{x_A + x_B = T\},\\
0, & \mathrm{otherwise}.
\end{dcases}
\end{align*}
This distribution $\pi_0$ is indeed the unique stationary distribution of the  reaction network on $\Gamma_0$ with stochastic mass-action kinetics:
\begin{align}\label{exp_toyred}
  A\ce{<=>[$x_A/2$][$3x_B/2$]} B.
\end{align}
Moreover, the reaction network \eqref{exp_toyred} might be seen as a  reduced stochastic reaction network   by elimination of the species $U$, in the  sense of  \cite{HW2021,hoessly2021sum}.

This motivating example   raises the natural question of how general these observations are. We will address this in the context of weakly reversible reaction networks with a complex balanced distribution and a set of non-interacting species (to be defined within the paper).  The species $U$ in the example is a non-interacting species.

In this context, we will show that the conditional distribution is a complex balanced distribution on a reduced stochastic reaction network obtained by eliminating the non-interacting species \cite{HW2021,hoessly2021sum}. Furthermore, this reduced reaction network might be considered the natural limit of a one-parameter sequence of scaled reaction networks. Most proofs are given in a separate section.

 We provide a brief overview of key definitions pertaining to non-interacting species and   reduced reaction networks. The concept of non-interacting species  was initially introduced in \cite{Variable_el} in the context of variable elimination in deterministic reaction networks, extending the notion of intermediate species  \cite{CW16,intermediates}. Non-interacting species are found in  most realistic  models of   chemical reaction networks and are generally conceived of as physically short-lived molecules.
In the stochastic setting, the connection between   reduced reaction networks (to be made precise below) and stochastic reaction networks with linear scaled kinetics was investigated in \cite{HW2021}, and it was established   that under appropriate assumptions, the dynamics     converges (in some sense) to that of the  reduced reaction network (see also \cite{hoessly2021sum}).

\subsection*{Acknowledgements}
LH was partially supported by the Swiss National Science Foundations Early Postdoc.Mobility grant (P2FRP2 188023). PX was partially supported by U.S. National Science Foundation grant DMS-2246850. The work presented in this article was supported by Novo Nordisk Foundation (Denmark), 
grant NNF19OC0058354.

\section{Preliminaries}

\subsection{Notation}

Let $\R$, $\R_{\geq 0}$ and $\R_{>0}$ be the set of real, non-negative and positive numbers, receptively.  Let $\N$ and $\N_0$ the set of positive and non-negative integers, receptively, and $\Z$ the set of integers.   
For $x=(x_1,\dots, x_n), y=(y_1,\dots, y_n)\in \R^n$, we say $x\geq y$, if $x_i\geq y_i$ for all $i=1,\dots, n$; and $x>y$ if $x\geq y$ and $x\neq y$.  Additionally, we define $x\vee y= (\max\{x_1,y_1\},\dots, \max\{x_n, y_n\})$, and $\langle x,y\rangle=\sum_{i=1}^n x_iy_i$ for the inner product on $\R^n$. For a finite set $\cI$, $|\cI|$ is the cardinality of $\cI$.

\subsection{Reaction networks}

A \emph{reaction network}  is a triple $\cN=(\cS,\cC,\cR)$  of three finite sets: a set $\cS=\{S_1,\ldots,S_n\}$ of \emph{species}, a set $\cC$ of \emph{complexes}, which  are non-negative linear combination of the species (complexes are identified as elements of $\N_0^n$), and a set  $\cR$ of \emph{reactions}, which are elements of $\cC\times\cC$. One might consider $(\cC,\cR)$ as a digraph. We assume all species have a positive coefficient in some complex and all complexes are the source or target of some reaction. If this case,  $\cC$ and $\cS$ can be determined from $\cR$. In examples, we simply draw the reactions.

For simplicity, for a reaction $r\in\cR$, we write  $r=(y,y')$ or $r=y\ce{->}y'$, where $y$ is the source   of $r$ and $y'$ the target. In the chemical literature, these are called the \emph{reactant} $\reac (r)=y$ and the \emph{product} $\prdt (r)=y'$ of $r$, respectively.  The \emph{reaction vector} of $r$ is $\zeta_r= y'-y\in\Z^n$.

We introduce an associative binary operation \cite{hoessly2021sum}:
\begin{align}\label{def_sum}
\oplus & \colon \ \big(\N_0^n \times \N_0^n\big) \times \big(\N_0^n \times \N_0^n\big) \to \N_0^n \times \N_0^n, \\
(y_1, y_1')\oplus (y_2, y_2') & \coloneqq  \big(y_1+0\vee (y_2-y_1'), y_2'+0\vee (y_1'-y_2)\big)   \nonumber
\end{align}
for $(y_1, y_1'), (y_2, y_2') \in \N_0^n \times \N_0^n$.
It might be interpreted as the sum of two reactions, for example, 
\[\big( (1,1,0), (0,0,1) \big) \oplus \big( (0,1,1), (0,1,0) \big) = \big( (1,2,0), (0,1,0) \big),\]
 corresponds to the sum (cf. \cite[Section 3.1.2]{syst_ing}),
\[S_1 + S_2 \ce{->} S_3 \quad \oplus \quad S_2 + S_3 \ce{->} S_2 \quad = \quad S_1 + 2S_2 \ce{->} S_2,\]
The left hand side of the sum consists of the molecules required for the two   reactions to proceed in succession, and the right hand side consists of the molecules that are produced and not consumed in the two reactions.

In the following, we assume a reaction network $\cN=(\cS,\cC,\cR)$ is given.

\begin{definition}
A state   $x_1\in \N_0^n$ \emph{leads to} $x_2\in \N_0^n$ via   $\cN$, denoted by $x_1 \to_{\cN} x_2$, if there exists reactions $r_1,\dots, r_k \in \cR$, such that  $r_1\oplus \cdots \oplus r_k=(y,y')$ with $x_1\geq y$ and $x_2-x_1=y'-y$.
\end{definition}

\begin{definition}[{\cite[Definition 10]{Cappelletti}}]\label{def_irdcmp}
A set $\Gamma\subseteq \N_0^n$ is   an irreducible component of $\cN$, if the for all $x\in \Gamma$ and all $z\in \N_0^n$, $x\to_{\cN} z$ if and only if $z\in \Gamma$.
\end{definition}

\begin{definition}
A reaction network is 
 \begin{enumerate}
\item[i)] {\it reversible}, if  $y\ce{->} y'\in \cR$ implies  $y'\ce{->}y\in \cR$.

\item[ii)] {\it weakly reversible}, if for any  $y\ce{->}y'\in \cR$, there exists a sequence   $r_1,\dots, r_m\in \cR$, satisfying  $\reac (r_1)=y'$, $\prdt(r_i)=\reac(r_{i+1})$ for all $i=1,\dots, m-1$ and $\prdt(r_m)=y$.
\end{enumerate}
\end{definition}

A {\it stochastic reaction network} (SRN) models the stochastic dynamics of a reaction network. Specifically,  the evolution of the molecule counts, $(X(t)\colon t\in \R_{\ge0})$, is modelled as an $\N_0^n$-valued CTMC, satisfying the following stochastic equation:
\begin{align}\label{eq:repr}
X(t)=X(0)+\sum_{r\in \cR}Y_r\Big(\int_0^t\lambda_{r}(X(s))ds\Big)\zeta_r,
\end{align}  
where $(Y_{r}\colon r\in \cR)$ is a collection of i.i.d.  unit rate Poisson processes   and $\lambda=(\lambda_{r}\colon r \in \cR)$ is a collection of non-negative intensity  functions on $\N_0^n$, known as the {\it  kinetics}.   {\it Stochastic mass-action kinetics} is commonly assumed in the literature; however, it is not a prerequisite for the majority of this paper's content. For any reaction $r = y_r \to y_r'$, the intensity function under mass-action kinetics can be written as the following form
\[
\lambda_r (x) = \kappa_r \frac{x!}{(x - y_r)!} = \kappa_r \prod_{i=1}^n \frac{x_i !}{(x_i - y_{r,i})!},
\]
with some positive constant $\kappa_r$.

An SRN is denoted by $(\cN,\lambda)$. Throughout, we assume a  compatibility condition is fulfilled:

\begin{condition}\label{con_cmp}
For any $r=y\ce{->}y'\in \cR$, $\lambda_{r}(x)>0$ if any only if $x\geq y$.
\end{condition}

If Condition \ref{con_cmp} is fulfilled, then $x\to_{\cN} z$ implies that  there are positive probability to go from $x$ to $z$, and vice versa.

A probability measure $\pi$ on an irreducible component $\Gamma\subseteq \N_0^n$ of $\cN$ is   \cite[Definition 4.1]{JoshiCap}:
\begin{enumerate}
\item[i)] a {\it stationary distribution}, if for all $x\in \Gamma$,
\begin{align}\label{def_msteq}
\pi(x)\sum_{r\in \cR}\lambda_{r}(x)=\sum_{r\in \cR}\pi(x-\zeta_r)\lambda_{r}(x-\zeta_r);
\end{align}

\item[ii)] a {\it complex balanced distribution}, if for all complexes $\eta\in \cC$, and all  $x\in \Gamma$,
\begin{align}\label{def_cbd}
\pi(x)\sum_{y'\colon \eta\to y'\in \cR}\lambda_{\eta\to y'}(x)=\sum_{y\colon y\to \eta\in \cR}\pi(x+y-\eta)\lambda_{y\to \eta}(x+y-\eta);
\end{align}

\item[iii)] a {\it detailed balanced distribution}, if $\cN$ is reversible and for all $y\ce{->}y'\in \cR$,
\begin{align*}
\pi(x)\lambda_{y\to y'}(x)=\pi(x+y'-y)\lambda_{y'\to y}(x+y'-y).
\end{align*}
\end{enumerate}

Note that we reserve `stationary distribution' to a distribution on a single irreducible component. Hence,  using Definition \ref{def_irdcmp}, a stationary distribution $\pi$  on an irreducible component $\Gamma$ of $(\cN,\lambda)$ fulfils  $\pi(x)>0$ for all  $x\in \Gamma$.
Equation \eqref{def_msteq} is known as the \emph{master equation} in the chemical literature (also known as the \emph{Kolmogorov forward equation}) for the SRN $(\cN,\lambda)$ \cite{anderson2015stochastic}. A detailed balanced distribution is   a complex balanced distribution, and a complex balanced distribution is  a stationary distribution.

\subsection{Non-interacting species and  reduction}

Let $\cN=(\cC,\cR,\cS)$  be a reaction network, and $\cU = \{U_1,\dots, U_m\}$ be a proper subset of $\cS$. Order the species such that   $ \cS \setminus \cU = \{S_1, \dots, S_{n-m}\}$ and $\cS=\{S_1, \dots, S_{n-m}, U_1,\dots, U_m\}$. Let $x=(z,u)\in\N_0^{n-m}\times \N_0^m=\N_0^n$ with $z=(z_1, \ldots, z_{n-m})$ and $u=(u_1,\ldots,u_m)$. We introduce the projection $\rho\colon\N_0^n \to \N_0^m$ onto the space of the species in $\cU$  by
\begin{align}\label{def_rho}
\rho (x) = \sum_{i=1}^m u_i U_i \in \N_0^m,\quad \text{for } x =(z,u) \in \N_0^n.
\end{align}

\begin{definition}
The set $\cU$ is a set of {\it non-interacting species}, if for any $y\in \cC$, either $\rho  (y) = 0$ or $\rho(y) = U_i$ for some $U_i \in \cU$.  The complement $\cS\setminus\cU$ is the set of \emph{core species}.
\end{definition}

For a set  $\cU=\{U_1,U_2,\dots, U_m\}$ of non-interacting species, let  $\{\cC_0, \cC_1, \dots, \cC_m\}$ be the partition of $\cC$  by 
\begin{align*}
 \cC_i \coloneqq \begin{cases} 
\displaystyle \big\{y\in \cC\colon \rho (y) = 0 \big\}, & i = 0,\\
\displaystyle \big\{y \in \cC\colon \rho (y) = U_i \big\}, &1\leq i\leq m,
\end{cases}
\end{align*}
and the partition  $\{\cR_{i,j}\colon i,j=0,\dots, m\}$  of $\cR$, where 
\[\cR_{i,j} \coloneqq \big\{y\ce{->}y'\colon y\in \cC_i, y'\in \cC_j \big\},\quad i,j=0,\dots, m.\] 
 We introduce a \emph{multi-digraph} $(\cV,\cE)$  \cite{saez2}:
\[
\cV=\{U_0\}\cup \cU\quad \mathrm{and} \quad \cE=\big\{U_i\ce{->[$r$]}U_j\colon r\in \cR_{i,j}, i,j=0,\dots, m\big\},
\]
 and note that there is a one-to-one correspondence between the edges $U_i\ce{->[$r$]}U_j$ and the reactions $r \in \cR_{i,j}$. 

   A $q$-step \emph{walk} $\theta$ in $(\cV,\cE)$ is a sequence of $q\in\N$ edges 
 \begin{align*}
\theta = U_{i_0}\ce{->[$r_1$]} U_{i_1} \ce{->[$r_2$]} \cdots \ce{->[$r_q$]} U_{i_q},
\end{align*}
where $r_k\in \cR_{i_{k-1},i_k}$. A $q$-step walk is \emph{closed}  if $U_{i_0} = U_{i_q}$. If $U_{i_0}=U_0$, then $U_{i_q}$ is said to be \emph{produced}, and if $U_{i_q}=U_0$, then $U_{i_0}$ is said to be \emph{degraded}. Let $\cU_{\pro}$ and $ \cU_{\deg}$ be the sets of produced and degraded non-interacting species, respectively.

For  $q\in \N_0$ and $i\in\{0,\dots, m\}$, we let $\Xi_i(q)$ be the collection of all $(q + 1)$-step  closed walks, that starts and ends at $U_i$ without passing though $U_0$. Then, every element in $\Xi_i(q)$ is of the form
\begin{align}\label{def_gm}
\gamma = U_i\ce{->[$r_0$]}U_{i_1}\ce{->[$r_1$]}\cdots \ce{->[$r_{q-1}$]}U_{i_q}\ce{->[$r_q$]}U_i,
\end{align}
where $i_1,\dots, i_q \in \{1,\dots, m\}$, $r_j \in \cR_{i_j,i_{j+1}}$, $0\leq j\leq q$, and by definition $i_0 = i_{q+1}= i$. In particular, if $\gamma\in\Xi_i(0)$ then $\gamma$ consists of one edge $U_i \ce{->[$r$]} U_i$ only, and if  $i=0$, then $\gamma$ is a reaction in $\cR$ between core species only.   Furthermore, define
\[\Xi_i \coloneqq \cup_{q=0}^{\infty}\Xi_i(q),\quad i=0,\ldots,m. \]

\begin{example}
Consider the reaction network \cite[Section 3.1.2]{syst_ing},
\begin{align}\label{ex_eab}
  E + A \ce{<=>} EA, \quad EA + B \ce{<=>} EAB, \quad EAB \ce{->} E + P + Q.
\end{align}
It models the conversion of two substrates $A,B$ into two other substrates $P,Q$ by means of an enzyme $E$, and short-lived intermediate molecules.
Let $\cU = \{U_1, U_2\} = \{EA, EAB\}$ be a set of non-interacting species. Then,   
\begin{gather*}
\cC_0 = \{E+A, E + P + Q\},\quad \cC_1  = \{EA, EA + B\},\quad \cC_2  = \{EAB\},\\
\cR_{0,0} = \cR_{0,2} = \cR_{1,1} = \cR_{2,2} = \emptyset, \quad \cR_{0,1} = \{E + A \ce{->} EA\},\\
 \cR_{1,0} = \{EA \ce{->} E + A\}, \quad \cR_{1,2} = \{EA + B \ce{->} EAB \},\\
\cR_{2,0} = \{EAB \ce{->} E + P + Q\}, \quad    \cR_{2,1} = \{EAB \ce{->} EA + B\}.
\end{gather*}
The associated multi-digraph is:
\begin{center}
\begin{tikzpicture}[node distance=6cm]
  \node (U0) {\color{red}$U_0$};
  \node (U1) [right of=U0] {\color{red}$U_1$};
  \node (U2) [right of=U1] {{\color{red}$U_2$}.};
  
  \path (U0) edge [->,bend left=8, blue] node [above,black] {\footnotesize{$E + A\to EA$}} (U1);
  \path (U1) edge [->,bend left=8, blue] node [below,black] {\footnotesize{$E + A \gets EA$}} (U0);
  \path (U2) edge [->, bend right=8,blue] node [above,black] {\footnotesize{$EA + B \gets EAB$}} (U1);
  \path (U1) edge [->, bend right=8,blue] node [below, black] {\footnotesize{$EA + B \to EAB$}} (U2);  
  \path (U2) edge [->, bend left=25, blue] node [above=3, black] {\footnotesize{$EAB \to E + P + Q$}} (U0);
\end{tikzpicture}
\end{center}
We find 
\begin{gather*}
\Xi_1(0) = \emptyset,\quad \Xi_1(1) = \big\{U_1\ce{->[$EA + B \to EAB$]} U_2 \ce{->[$EAB \to EA + B$]} U_1\big\},\quad \Xi_1(2) = \emptyset,\\
\Xi_1(3) = \big\{U_1 \ce{->[$EA + B \to EAB$]} U_2 \ce{->[$EAB \to EA + B$]} U_1 \ce{->[$EA + B \to EAB$]} U_2 \ce{->[$EAB \to EA + B$]} U_1\big\}.
\end{gather*}
\end{example}

\begin{definition}\label{def_elmnb}
Let   $\cU$ be a set of non-interacting species. If $\cU_{\pro} \subseteq \cU_{\deg}$, then $\cU$ is said to be \textit{eliminable}.
\end{definition}

 In  \eqref{ex_eab}, we have $\cU_{\pro} = \cU_{\deg} = \{U_1, U_2\}$, and thus  $\cU$ is eliminable.
Definition \ref{def_elmnb}   differs   from \cite[Definition 4.1]{hoessly2021sum}  with the latter encompassing more general situations. However, these two definitions align in the context of non-interacting species.

\begin{lemma}\label{lmm_i=j}
Let $\cN$ be  weakly reversible  and $\cU$ a set of non-interacting species. Then, $\cU_{\pro} = \cU_{\deg}$ and  $\cU$ is eliminable. 
\end{lemma}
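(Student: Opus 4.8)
The plan is to recast the statement entirely in terms of reachability in the multi-digraph $(\cV,\cE)$. By the definitions of produced and degraded species, $\cU_{\pro}=\{U_j\in\cU:\text{there is a walk from } U_0 \text{ to } U_j\}$ and $\cU_{\deg}=\{U_j\in\cU:\text{there is a walk from } U_j \text{ to } U_0\}$. Since $\cU$ is eliminable precisely when $\cU_{\pro}\subseteq\cU_{\deg}$ by Definition \ref{def_elmnb}, it suffices to prove the single assertion $\cU_{\pro}=\cU_{\deg}$. I would deduce this from the claim that every weakly connected component of $(\cV,\cE)$ is strongly connected: granting this, $U_0$ reaches $U_j$ if and only if $U_j$ and $U_0$ lie in a common component, if and only if $U_j$ reaches $U_0$, whence the two sets coincide.

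The heart of the argument — and the only place weak reversibility enters — is to show that every edge of $(\cV,\cE)$ lies on a closed walk. Fix an edge $U_i\ce{->}U_j$; by the one-to-one correspondence between edges and reactions it comes from some $r=y\ce{->}y'\in\cR_{i,j}$ with $y\in\cC_i$ and $y'\in\cC_j$. Weak reversibility of $\cN$ supplies a directed path in $(\cC,\cR)$ from $y'$ back to $y$, say $y'=w_0\ce{->}w_1\ce{->}\cdots\ce{->}w_\ell=y$. Because $\{\cC_0,\dots,\cC_m\}$ partitions $\cC$, each complex $w_k$ lies in a unique block $\cC_{a_k}$, and each reaction $w_k\ce{->}w_{k+1}$ lies in $\cR_{a_k,a_{k+1}}$, hence corresponds to an edge $U_{a_k}\ce{->}U_{a_{k+1}}$ of $(\cV,\cE)$. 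Reading off the endpoints, this produces a walk $U_j=U_{a_0}\ce{->}\cdots\ce{->}U_{a_\ell}=U_i$ from $U_j$ back to $U_i$; concatenating it after the original edge yields a closed walk through $U_i\ce{->}U_j$.

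Finally I would assemble the conclusion. The mutual-reachability relation on $\cV$ is an equivalence relation whose classes are the strongly connected components. The previous paragraph shows that the two endpoints of any edge are mutually reachable, so any two vertices joined by an edge lie in the same class; since the undirected graph underlying $(\cV,\cE)$ is generated by its edges, transitivity forces each weakly connected component to be a single such class, i.e.\ to be strongly connected. Applying this to the component containing $U_0$ gives $\cU_{\pro}=\cU_{\deg}$, and eliminability follows. I expect the projection step of the middle paragraph to be the main obstacle: one must check that a genuine directed path in $(\cC,\cR)$ descends to a legitimate walk in $(\cV,\cE)$, which relies on the partition blocks $\cC_k$ and reaction blocks $\cR_{k,l}$ being compatibly indexed. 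This is routine once the bookkeeping is set up, but it is the step that actually converts the weak-reversibility hypothesis into the graph-theoretic property we need.
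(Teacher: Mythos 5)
Your proof is correct, and it is organized differently from the paper's. Both arguments turn on the same engine --- weak reversibility supplies a directed path in $(\cC,\cR)$ from $y'$ back to $y$, and because $\{\cC_0,\dots,\cC_m\}$ partitions $\cC$ and each reaction in $\cR_{k,l}$ is an edge $U_k\ce{->}U_l$, that path projects blockwise to a walk in $(\cV,\cE)$ --- but you package it as a single global lemma (every edge of $(\cV,\cE)$ lies on a closed walk, hence weakly connected components are strongly connected), whereas the paper proves $\cU_{\pro}\subseteq\cU_{\deg}$ by induction on the length $q$ of the walk witnessing $U_i\in\cU_{\pro}$, with a case split on whether the reverse path meets $\cC_0$: if it does, the projection reaches $U_0$ directly; if not, the induction hypothesis is applied to $U_{i_q}$ and walks are concatenated through it. Your route eliminates both the induction and the case analysis, precisely because mutual reachability is indifferent to whether the projected reverse walk happens to pass through $U_0$; it also delivers a slightly stronger structural conclusion (strong connectivity of every weak component of the multi-digraph, not just of the one containing $U_0$). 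What the paper's version buys in exchange is that it stays entirely inside the definitions of produced and degraded species, never needing the equivalence-class argument; what yours buys is brevity and the clean isolation of the one step where weak reversibility is consumed --- the projection step you correctly flag as the crux, and which the paper's base case ($q=0$: ``$U_i\in\cU_{\deg}$ by weak reversibility'') glosses over rather than spells out.
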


\begin{proof}
 To prove $\cU_{\pro} = \cU_{\deg}$, it suffices to prove the one-directional inclusion  $\cU_{\pro} \subseteq \cU_{\deg}$. The reverse direction follows by the same reasoning. Let $U \in \cU_{\pro}$. Then, there exists $q \in \N_{\geq 0}$ and a walk   in $(\cV, \cE)$
\[
  U_0 \ce{->} U_{i_1} \ce{->} \cdots \ce{->} U_{i_q} \ce{->} U_i,
\]
where the edge labels  are skipped for simplification. If $q = 0$, then $U_0 \ce{->} U_i \in \cE$, and thus $U_i \in \cU_{\deg}$ by  weak reversibility.

For  $q \geq 1$, we   prove the lemma by induction. First, $U_{i_q} \ce{->} U_i$ implies the existence of a reaction $y \ce{->} y' \in \cR_{i_q, i}$. Then, due to  weak reversibility, there exists a sequence of reactions in $\cR$, such that
\[
   y' \ce{->} y_1\ce{->}\cdots \ce{->} y_{q'}\ce{->} y.
\]
If there exists $j\in \{1,\dots, q'\}$ such that $y_j \in \cC_0$, then we are done. Otherwise, by the induction hypothesis, we have $U_{i_q} \in \cU_{\deg}$. This implies that there exists a walk of the form
\[
  U_0 \ce{->} \cdots \ce{->} U_{i_q} \ce{->} U_i \ce{->} \cdots \ce{->} U_{i_q} \ce{->} \cdots \ce{->} U_0.
\]
Therefore, $U_i \in \cU_{\deg}$. The proof is complete.
\end{proof}
 
We introduce a map $\kR$ defined on the collection of all walks in $(\cV,\cE)$:
 \begin{align*}
\kR(\theta) &  \coloneqq r_1\oplus r_2 \oplus \cdots \oplus r_q \in \N_0^n \times \N_0^n,\\
\theta &= U_{i_1}\ce{->[$r_1$]}U_{i_2}\ce{->[$r_2$]}\cdots\ce{->[$r_q$]}U_{i_{q+1}}.
\end{align*}
Then, we have  
\[
\cR_{i,i} = \kR(\Xi_i(0)) \subseteq \kR (\Xi_i), \quad i=0,\dots, m.\]
In particular, $\rho (y) = \rho  (y') = 0$ for  $\gamma \in \Xi_0$ with $ \kR(\gamma)=(y,y')$.  

\begin{definition}\label{def_grrn}
Let $\cU$ be an eliminable set of species.
The \emph{reduced reaction network} $\cN_{\cU}$, obtained by elimination of $\cU$ from  $\cN$,  is the triple $(\cS_{\cU},\cC_{\cU},\cR_{\cU})$ given by
\[\cS_{\cU} \coloneqq \mathop\cup_{y\in \cC_{\cU}} \supp(y)\subseteq \cS_0,\quad  \cC_{\cU} \coloneqq \{y,y'\colon y\to y'\in\cR_{\cU}\}, \quad   \cR_{\cU} = \kR(\Xi_0) \setminus \{(y,y)\colon y\in \N_0^{n_{\cU}}\},\]
where  $\supp (y) = \{S_i\colon y_i>0\}$ is the \emph{support} of a complex $y = \sum_{j=1}^n y_j S_j$, and
$n_{\cU} = |\cS_{\cU}|$.
\end{definition}

By Definition \ref{def_grrn},  we find for   \eqref{ex_eab},
\[
\kR(\Xi_0) = \bigg\{\begin{array}{c}
    E + A \ce{->} E + A, \quad E + A + B \ce{->} E + A + B, \\
    E + A + B \ce{->} E + P + Q
  \end{array} \bigg\},
\]
and thus the reduced reaction network is the triple $(\cS_{\cU},\cC_{\cU}, \cR_{\cU})$ with $\cS_{\cU} =\{E, A, B, P, Q\}$, $\cC_{\cU} = \{E + A + B, E + P + Q\}$, and $\cR_{\cU}  = \{E + A + B \ce{->} E + P + Q\}$.

The set $\cR_{\cU}$  may contain infinitely many reactions \cite[Example 2]{hoessly2021sum}, and thus the reduced reaction network might not be a reaction network.   Condition \ref{con_1} below is equivalent to $\cR_{\cU}$ being finite \cite{hoessly2021sum}, and $\cN_\cU$ is thus a reaction network if  Condition \ref{con_1} is fulfilled. The proof is in Section \ref{app_fgrn}.

\begin{condition}\label{con_1} 
If $\gamma\in \Xi_i$, for some $i=1,\ldots,m$, then $\kR(\gamma) = (y,y)$ for some $y\in \N_0^n$.
\end{condition}

\begin{lemma}\label{lmm_fgrn}
The set  $\cR_{\cU}$ is finite if and only if Condition \ref{con_1} holds.
\end{lemma}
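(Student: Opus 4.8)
The plan is to translate the combinatorics of walks into statements about reaction vectors, via a closed form for the iterated operation $\oplus$. For a walk with reactions $r_0,\dots,r_q$, put $\zeta_{r_k}=\prdt(r_k)-\reac(r_k)$ and $S_k=\sum_{l=0}^{k-1}\zeta_{r_l}$ (so $S_0=0$). An induction on $q$ using \eqref{def_sum} gives $\kR(r_0\oplus\cdots\oplus r_q)=(Y,Y')$ with
\[
Y=\bigvee_{k=0}^{q}\big(\reac(r_k)-S_k\big),\qquad Y'=Y+S_{q+1}.
\]
Two corollaries steer the rest. First, $Y'=Y$ exactly when the total reaction vector $S_{q+1}$ vanishes, so $\kR(\gamma)$ has the form $(y,y)$ if and only if $\gamma$ has zero net reaction vector; hence Condition \ref{con_1} is equivalent to the purely vectorial assertion that every closed walk $\gamma\in\Xi_i$ with $i\ge1$ satisfies $\sum_k\zeta_{r_k}=0$. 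Second, net reaction vectors are additive under concatenation, so deleting from a walk a contiguous closed sub-walk of zero net reaction vector changes none of the retained partial sums $S_k$ nor the total $S_{q+1}$.

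For the implication ``Condition \ref{con_1} $\Rightarrow$ $\cR_{\cU}$ finite'', I would take an arbitrary $\gamma\in\Xi_0$ and iteratively splice out interior cycles: each time an interior vertex (necessarily in $\cU$) repeats, the sub-walk between two of its occurrences is a closed walk in some $\Xi_i$, $i\ge1$, hence has zero net reaction vector by the reformulated condition and can be excised. After finitely many excisions, each prefix reduces to a walk from $U_0$ to its current endpoint through \emph{distinct} $\cU$-vertices, whose net reaction vector is the unchanged $S_k$; since there are at most $m$ distinct interior vertices and finitely many reactions, only finitely many such skeletons exist, so the sets $\{S_k\}$ and $\{S_{q+1}\}$ (over all $\gamma\in\Xi_0$ and all $k$) are finite. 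As the reactants $\reac(r_k)$ lie in the finite complex set $\cC$, the vectors $\reac(r_k)-S_k$ lie in a fixed finite set $F\subseteq\Z^n$; then $0\le Y\le\bigvee F$ traps $Y$ in a finite box of $\N_0^n$, and $Y'=Y+S_{q+1}$ is trapped in a finite set as well. Therefore $\kR(\Xi_0)$ is finite, and a fortiori so is $\cR_{\cU}$.

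For the converse I argue contrapositively. If Condition \ref{con_1} fails, fix $\gamma^{*}\in\Xi_j$, $j\ge1$, with nonzero net reaction vector $\delta$. Taking a closed walk through $U_0$ that reaches $U_j$ and returns, say $\alpha_1$ from $U_0$ to $U_j$ and $\alpha_2$ from $U_j$ to $U_0$, the walks $\alpha_1(\gamma^{*})^{k}\alpha_2$, $k\in\N$, all lie in $\Xi_0$, and by the formula their images under $\kR$ have net reaction vector $c+k\delta$ for a fixed $c$. These are pairwise distinct and off the diagonal for large $k$, producing infinitely many elements of $\cR_{\cU}$. The delicate point is the existence of such a closed $U_0$-walk through $U_j$: I would secure it by first discarding every $\cU$-vertex lying on no closed $U_0$-walk (these never appear in $\Xi_0$ and so do not affect $\cR_{\cU}$), after which any $\cR_{\cU}$-relevant offending cycle is based at a vertex reachable from and back to $U_0$; this reduction is consistent with the running elimination setup (Definition \ref{def_elmnb}, Lemma \ref{lmm_i=j}).

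I expect the main obstacle to be the uniform bound in the first implication, namely that although walks in $\Xi_0$ may be arbitrarily long, the reactant $Y$ cannot leave a fixed finite box. This is exactly where the explicit formula for $\kR$ and the excision observation do the work, converting the cycle hypothesis (Condition \ref{con_1}) into boundedness of the partial sums $S_k$. The reachability bookkeeping in the converse is a secondary concern.
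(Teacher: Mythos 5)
Your argument is correct, and in the forward direction it takes a genuinely different route from the paper's. The paper proves that Condition \ref{con_1} forces $|\kR(\Xi_0)|<\infty$ by an induction on the number of distinct $\cU$-vertices appearing in a closed walk, repeatedly splitting walks into closed sub-walks joined by paths and importing properties of $\oplus$ from \cite{hoessly2021sum}. You instead derive the closed form $\kR(r_0\oplus\cdots\oplus r_q)=(Y,\,Y+S_{q+1})$ with $Y=\bigvee_{k}\big(\reac(r_k)-S_k\big)$, which does follow by induction from \eqref{def_sum} and associativity (adding a fixed vector commutes with the componentwise maximum). This formula does two jobs at once: it turns Condition \ref{con_1} into the purely vectorial statement that every cycle avoiding $U_0$ has zero net reaction vector, and it reduces finiteness of $\kR(\Xi_0)$ to boundedness of the partial sums $S_k$, which your excision argument (delete zero-net closed sub-walks until the prefix visits distinct interior vertices, leaving all retained $S_k$ unchanged) supplies using only finiteness of $\cR$ and of $\cU$. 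The result is more elementary and self-contained than the paper's induction, and it isolates the actual mechanism: Condition \ref{con_1} is a zero-cycle condition, and zero cycles can be excised harmlessly. For the converse, both proofs are the same pumping construction $\theta_1+(\gamma^*)^{k}+\theta_2$; you distinguish the resulting reactions by their net vectors $c+k\delta$, while the paper tracks the growth of one coordinate of the reactant --- a cosmetic difference.

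One caveat, which you share with the paper rather than introduce: the pumping step needs the offending cycle to sit at a vertex lying on some walk of $\Xi_0$. The paper silently takes $i_0\in\{1,\dots,m_{\pro}\}$ and invokes eliminability for the return path; you flag the point and propose discarding vertices that lie on no closed $U_0$-walk. Neither maneuver covers the degenerate case in which every cycle violating Condition \ref{con_1} sits at such a discarded vertex --- there $\cR_{\cU}$ is finite while Condition \ref{con_1} as literally stated fails --- so your proof establishes the lemma exactly to the extent that the paper's own proof does.
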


Let $(\cN,\lambda)$ be an SRN. We introduce a kinetics $\lambda_{\cU}$ on the reduced reaction network $\cN_{\cU}$ \cite{HW2021}. For  $r \in \cR_{i,j}\subseteq \cR$, $i,j\in \{0,\dots, m\}$,   define the function $\beta_r \colon \N_0^{n}\to [0,1]$  by
\begin{align}\label{def_btijk}
\beta_r (x) \coloneqq \lambda_r(x) \bigg/ \sum_{k = 0}^m\sum_{r'\in  \cR_{i,k}}\lambda_{r'}(x),
\end{align}
with the convention $0/0 \coloneqq 0$. Then,  
\begin{align}\label{equ_sbt=1}
\sum_{k=0}^m \sum_{r\in \cR_{i,k}} \beta_r(x) = \1_{\cD_i}(x),\quad i=0,\ldots,m,
\end{align}
where
\[
\cD_i \coloneqq \bigcup_{k=0}^m \bigcup_{r\in \cR_{i,k}} \big\{x\in \N_0^n\colon x\geq \reac (r)\big\}.
\]
For  $\gamma\in \Xi_0$,   define $\lambda^*_{\cU,\gamma}\colon\N_0^{n}\to \R_{\geq 0}$   by  
\begin{align}\label{def_lmd0}
\lambda^*_{\cU,\gamma}(x) \coloneqq \lambda_{r_0}(x) \prod_{j=1}^{q}\beta_{r_j} \bigg(x+\sum_{i=1}^{j}\zeta_{i - 1}\bigg), 
\end{align}
where $\zeta_{i- 1} \coloneqq \zeta_{r_{i- 1}}$ is the reaction vector of $r_{i - 1}$, and $i_0 = i_{q+1} \coloneqq 0$.\
Define the kinetics $\lambda_{\cU} = (\lambda_{\cU,r} \colon r\in \cR_{\cU})$ by
\begin{align}\label{def_lmd'}
\lambda_{\cU,r}(x)= \sum_{\gamma\in \Xi_0 \colon \kR(\gamma) = r}\lambda^*_{\cU,\gamma}(x),\quad x\in \N_0^{n_{\cU}},\quad r\in\cR_\cU.
\end{align}
It holds that $\lambda_{\cU,r}(x)> 0$ if and only if $x\geq \reac(r)$ \cite[Corollary 3.11]{HW2021}.
As a consequence,  $(\cN_{\cU},\lambda_{\cU})$ satisfies Condition \ref{con_cmp}.  Furthermore, using Definition \ref{def_grrn}, we have
$$ \sum_{r\in\cR_\cU} \lambda_{\cU,r}(x)\le \sum_{\gamma\in\Xi_0} \lambda^*_{\cU,\gamma}(x)=\sum_{r\in\cR} \lambda_r(x)  <\infty$$
\cite[Lemma 3.10]{HW2021}. This implies that the evolution of the molecule counts of the core species, $(Z(t)\colon t\in \R_{\ge0})$,  satisfies the following stochastic equation, similar to \eqref{eq:repr}:
\begin{align*}
Z(t)=Z(0)+\sum_{r\in \cR_\cU}Y_r\Big(\int_0^t\lambda_{r}(Z(s))ds\Big)\zeta_r,
\end{align*}  
where $(Y_r$, $r\in\cR_\cU)$, is a collection (possibly infinite) of unit-rate i.i.d. Poisson processes.

\begin{definition}
The pair $(\cN_{\cU},\lambda_{\cU})$, defined in Definition \ref{def_grrn} and \eqref{def_lmd'}, is the \emph{reduced SRN} of $(\cN,\lambda)$ (obtained by elimination the non-interacting species in $\cU$). 
\end{definition}

If $\cR_\cU$ is finite then $(\cN_{\cU},\lambda_{\cU})$ is an  SRN. The kinetics for the reduced reaction network in Example  \eqref{ex_eab}  is 
\begin{align*}
\lambda_{\cU, E + A + B \to E + P + Q} (x) = & \lambda_1(x) \lambda_2(x) \lambda_3(x) \lambda_4(x) 
\end{align*}
where
\begin{align*}
\lambda_1(x) \coloneqq \lambda_{E + A \to EA} (x) ,\qquad \lambda_2(x) \coloneqq \frac{\lambda_{EA + B \to EAB}}{\lambda_{EA + B \to EAB} + \lambda_{EA \to E + A}}(x + EA - E - A)
\end{align*}
\begin{align*}
\lambda_3(x) \coloneqq \bigg(&\frac{\lambda_{EAB \to EA + B}}{\lambda_{EAB \to EA + B} + \lambda_{EAB \to E + P + Q}}(x - E - A - B + EAB) \\
&\times \frac{\lambda_{EA + B \to EAB}}{\lambda_{EA + B \to EAB} + \lambda_{EA \to E + A}}(x + EA - E - A)\bigg)^{-1},
\end{align*}
and
\begin{align*}
\lambda_4(x) \coloneqq &\frac{\lambda_{EAB \to E + P + Q}}{\lambda_{EAB \to EA + B} + \lambda_{EAB \to E + P + Q}}(x - E - A - B + EAB).
\end{align*}

\section{The conditional distribution as a stationary distribution}

\begin{condition}\label{con_2} If $\kR(\gamma)=(y+U_i,y'+U_i)$ with $\gamma\in \Xi_i$, for some $i=1,\ldots,m$,  then  i)  $y\not< y'$, and  ii)  $y\not> y'$.
\end{condition}

Condition \ref{con_1} is a stronger version of Condition \ref{con_2}.

\begin{lemma}\label{lmm_con1-2}
Let $\cN$ be   weakly reversible and $\cU$ a set of eliminable non-interacting species. Then, Condition \ref{con_1} implies Condition \ref{con_2}.
\end{lemma}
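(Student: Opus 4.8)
The plan is to reduce the claim to one structural observation about the combined reaction $\kR(\gamma)$ of a closed walk $\gamma\in\Xi_i$, after which the implication is immediate. First I would show that for every $i\in\{1,\dots,m\}$ and every $\gamma\in\Xi_i$ the reaction $\kR(\gamma)$ necessarily has the form $(y+U_i,\,y'+U_i)$ with $\rho(y)=\rho(y')=0$; that is, both the reactant and the product of $\kR(\gamma)$ carry the single non-interacting species $U_i$ with coefficient one, and $y,y'$ are supported on core species only. This is exactly the shape assumed in the premise of Condition \ref{con_2}, so establishing it shows that this premise is automatically met for $\gamma\in\Xi_i$.

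To prove the structural fact I would induct on the number of edges of $\gamma$, tracking only the $\cU$-coordinates of the partial compositions. Writing $\gamma = U_i\to U_{i_1}\to\cdots\to U_{i_q}\to U_i$ with reaction labels $r_0,\dots,r_q$ (so $r_k\in\cR_{i_k,i_{k+1}}$, $i_0=i_{q+1}=i$) and setting $(a_k,b_k)\coloneqq r_0\oplus\cdots\oplus r_k$, the invariant to maintain is $\rho(a_k)=U_i$ and $\rho(b_k)=U_{i_{k+1}}$. The base case is the membership $r_0\in\cR_{i,i_1}$. For the inductive step, since $\cU$ is non-interacting every reactant and product carries at most one non-interacting species, so $r_{k+1}$ satisfies $\rho(\reac(r_{k+1}))=U_{i_{k+1}}=\rho(b_k)$; hence both $\reac(r_{k+1})-b_k$ and $b_k-\reac(r_{k+1})$ vanish in the $\cU$-coordinates, the two terms $0\vee(\cdot)$ appearing in \eqref{def_sum} have zero $\cU$-projection, and the invariant propagates to $(a_{k+1},b_{k+1})$. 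Evaluating at $k=q$ and using $i_{q+1}=i$ then gives $\rho(\reac(\kR(\gamma)))=\rho(\prdt(\kR(\gamma)))=U_i$ with no accumulation of non-interacting species, which is the claimed form.

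With the structural fact in hand the implication is a one-line comparison. Assuming Condition \ref{con_1}, any $\gamma\in\Xi_i$ satisfies $\kR(\gamma)=(w,w)$ for some $w\in\N_0^n$; matching this against $(y+U_i,y'+U_i)$ forces $w=y+U_i=y'+U_i$, hence $y=y'$. Since each of $y<y'$ and $y>y'$ requires $y\neq y'$, we obtain $y\not<y'$ and $y\not>y'$, which is Condition \ref{con_2}. I expect the only genuine work to be the $\oplus$-bookkeeping of the second paragraph, where the main subtlety is walks that revisit a vertex and one must check that the $\cU$-coordinates never accumulate; the final logical step is trivial. I would also remark that weak reversibility and eliminability do not drive the core implication — by Lemma \ref{lmm_i=j} weak reversibility secures $\cU_{\pro}=\cU_{\deg}$, so these hypotheses are present to guarantee that $\cN_{\cU}$ and the sets $\Xi_i$ are the meaningful objects that Conditions \ref{con_1} and \ref{con_2} refer to, rather than to supply the comparison itself.
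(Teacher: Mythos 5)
Your proposal is correct. The paper in fact gives no explicit proof of this lemma: it is stated right after the remark that Condition \ref{con_1} is a stronger version of Condition \ref{con_2}, and the intended argument is precisely your final one-line comparison — under Condition \ref{con_1} we have $\kR(\gamma)=(w,w)$, so whenever $\kR(\gamma)$ is written as $(y+U_i,y'+U_i)$ we get $y+U_i=y'+U_i$, hence $y=y'$, and neither $y<y'$ nor $y>y'$ can hold. Note that this step alone already establishes the implication; your structural lemma (that $\kR(\gamma)$ for $\gamma\in\Xi_i$ always has the form $(y+U_i,y'+U_i)$ with $\rho(y)=\rho(y')=0$) is not logically required for it, since Condition \ref{con_2} is an implication whose premise supplies that form. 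That said, your $\oplus$-bookkeeping is correct: because $\rho(\reac(r_{k+1}))=\rho(b_k)=U_{i_{k+1}}$, both terms $0\vee(\cdot)$ in \eqref{def_sum} vanish in the $\cU$-coordinates, so the invariant propagates; this is exactly the fact the paper uses implicitly elsewhere, e.g.\ in the proof of Lemma \ref{lmm_fgrn}, where $\kR(\gamma)$ is written as $\oplus_{i}(y_i+U_{i_0},y_i+U_{i_0})$, and in Statement A of the proof of Lemma \ref{lmm_fnit2}. Your closing observation that weak reversibility and eliminability do not drive the implication itself, but only make $\Xi_i$ and $\cN_{\cU}$ the meaningful objects (via Lemma \ref{lmm_i=j}), is also accurate.
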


\begin{theorem}\label{thm_sdpi0}
Let $(\cN,\lambda)$ be a weakly reversible SRN with  a set $\cU$ of non-interacting species fulfilling Condition \ref{con_2}. Let $(\cN_{\cU},\lambda_{\cU})$ be the  reduced SRN of $(\cN,\lambda)$ by eliminating  the species in $\cU$. Let $\Gamma$ be an irreducible component of $\cN$ such that
\begin{align}\label{def_gmm0-nint}
\Gamma_0 \coloneqq \big\{ x\in \Gamma \colon \rho(x) = 0 \big\} \neq \emptyset.
\end{align}
Suppose that $\pi$ is a complex balanced distribution for $(\cN,\lambda)$ on $\Gamma$.  Then,
\begin{enumerate}

\item[i)] $\Gamma_0$ is either an irreducible component of $\cN_{\cU}$ or the union of at most countable many disjoint irreducible components of $\cN_{\cU}$. 

\item[ii)] The conditional distribution $\pi_0$ of $\pi$ restricted to $\Gamma_0$ is a stationary distribution for $(\cN_{\cU},\lambda_{\cU})$ on $\Gamma_0$, if $\Gamma_0$ consists of a single irreducible component, and otherwise,  it is a linear combination of stationary distributions.  
\end{enumerate}
\end{theorem}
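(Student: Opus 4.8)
The plan is to deduce the reduced master equation for $\pi_0$ from the complex balance equations \eqref{def_cbd} for $\pi$, after first repackaging complex balance into an aggregated, per-class form. Fix $i\in\{0,\dots,m\}$ and a state $w\in\N_0^n$, and sum \eqref{def_cbd} over all complexes $\eta\in\cC_i$ with reference state $x=w$. Writing $\Lambda_i(w):=\sum_{k=0}^m\sum_{r\in\cR_{i,k}}\lambda_r(w)$ for the total rate of reactions issuing from class $i$, the left-hand sides collapse to $\pi(w)\Lambda_i(w)$, while on the right-hand side every reaction with target in $\cC_i$ appears exactly once, so that
\[
\pi(w)\Lambda_i(w)=\sum_{j=0}^m\sum_{r\in\cR_{j,i}}\pi(w-\zeta_r)\lambda_r(w-\zeta_r),\qquad i=0,\dots,m, \qquad (\star_i)
\]
where terms with $w-\zeta_r\notin\Gamma$ vanish by Condition \ref{con_cmp} and the support of $\pi$. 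I regard $(\star_i)$ with $i\ge 1$ as the tool that ``integrates out'' the non-interacting species, and $(\star_0)$ as the tool producing the core balance.

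Next I would compute the total reduced out-flux at a core state $z$, identifying $\Gamma_0$ with the states carrying $u=0$. Since $\lambda_r(z,0)=0$ for every reaction with source in some $\cC_i$, $i\ge1$, the identity $\sum_{\gamma\in\Xi_0}\lambda^*_{\cU,\gamma}(x)=\sum_{r\in\cR}\lambda_r(x)$ evaluated at $x=(z,0)$ reads $\sum_{\gamma\in\Xi_0}\lambda^*_{\cU,\gamma}(z,0)=\Lambda_0(z,0)$. Hence the reduced out-flux $\pi(z,0)\sum_{\gamma}\lambda^*_{\cU,\gamma}(z,0)$ equals $\pi(z,0)\Lambda_0(z,0)$, which by $(\star_0)$ equals the total original inflow into $(z,0)$ through reactions with core target, namely $\sum_{j=0}^m\sum_{r\in\cR_{j,0}}\pi((z,0)-\zeta_r)\lambda_r((z,0)-\zeta_r)$. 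The $j=0$ summands are already core-to-core flux (the trivial walks of $\Xi_0(0)$). For each $j\ge1$ summand the predecessor $w_1:=(z,0)-\zeta_r$ satisfies $\rho(w_1)=U_j$, and since $r\in\cR_{j,0}$ is a class-$j$ reaction I may substitute, using $(\star_j)$ and \eqref{def_btijk}, the relation $\pi(w_1)\lambda_r(w_1)=\beta_r(w_1)\,\bigl[\pi(w_1)\Lambda_j(w_1)\bigr]=\beta_r(w_1)\sum_{j'}\sum_{r'\in\cR_{j',j}}\pi(w_1-\zeta_{r'})\lambda_{r'}(w_1-\zeta_{r'})$.

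The heart of the argument is to iterate this substitution. Each application peels off one factor $\beta_{r_\ell}$ evaluated at the correct intermediate state $x_0+\sum_{i=1}^{\ell}\zeta_{i-1}$ occurring in \eqref{def_lmd0}, and steps one reaction backwards along a walk in the multidigraph $(\cV,\cE)$; the recursion stops precisely when a predecessor lands in $\cC_0$, which supplies the initial factor $\lambda_{r_0}$. Reading the resulting backward reaction sequences forwards, they are exactly the closed walks $\gamma\in\Xi_0$, and the weight accumulated along $\gamma$ is $\pi(z-\zeta_{\kR(\gamma)},0)\,\lambda^*_{\cU,\gamma}(z-\zeta_{\kR(\gamma)},0)$, i.e.\ the reduced in-flux carried by $\gamma$. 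Summing, the original inflow into $(z,0)$ equals $\sum_{\gamma\in\Xi_0}\pi(z-\zeta_{\kR(\gamma)},0)\lambda^*_{\cU,\gamma}(z-\zeta_{\kR(\gamma)},0)$, which is the total reduced in-flux at $z$ (the trivial walks cancel against identical terms on the out-flux side). This gives the reduced master equation for $\pi_0=\pi|_{\Gamma_0}/\pi(\Gamma_0)$, hence part ii) on each irreducible component. I expect the main obstacle to be controlling this infinite recursion: I must justify interchanging the countably many walk sums---via the uniform bound $\sum_{\gamma}\lambda^*_{\cU,\gamma}\le\sum_r\lambda_r<\infty$ together with non-negativity---and verify that the reconstructed walks meet $U_0$ only at their endpoints, so that exactly $\Xi_0$ is produced without double counting; it is here that Condition \ref{con_2} (and weak reversibility, through eliminability and Lemma \ref{lmm_i=j}) keeps the excursions well-behaved.

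Finally, for part i) and the ``linear combination'' refinement I would argue by Markov-chain generalities. Any reduced reaction $\kR(\gamma)$ carries $(z,0)$ to $(z+\zeta_{\kR(\gamma)},0)$ along a realizable $\cN$-path, so closedness of the irreducible component $\Gamma$ forces $\Gamma_0$ to be invariant under $\to_{\cN_\cU}$. Because the measure $\pi_0$ just shown to solve the reduced balance is strictly positive on all of $\Gamma_0$ (as $\pi>0$ on $\Gamma$), no state of $\Gamma_0$ can be transient for the reduced dynamics; hence $\Gamma_0$ is a disjoint union of closed communicating classes, each an irreducible component of $\cN_\cU$ in the sense of Definition \ref{def_irdcmp}, and there are at most countably many since $\Gamma_0\subseteq\N_0^{n-m}$. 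Restricting and renormalizing $\pi_0$ on each such component yields the unique stationary distribution there, and $\pi_0$ is the corresponding convex combination, which completes ii).
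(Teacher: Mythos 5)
Your algebraic skeleton --- the class-aggregated complex balance identities $(\star_i)$, the substitution $\pi(w_1)\lambda_r(w_1)=\beta_r(w_1)\,\pi(w_1)\Lambda_j(w_1)$, and the backward unfolding into walks of $\Xi_0$ --- is exactly the mechanism of the paper's proof, but you assume away the single step where Condition \ref{con_2} actually does work. Two identities are needed: on the out-flux side, $\sum_{\gamma\in\Xi_0,\,\init(\gamma)=r_*}\lambda^*_{\cU,\gamma}(x)=\lambda_{r_*}(x)$ for each $r_*\in\cR_{0,\iota}$, and on the in-flux side, that your iterated substitution terminates, i.e.\ the residual term carried by walks still ``open'' after $k$ substitutions vanishes as $k\to\infty$. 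Neither is a matter of interchanging countable sums (Tonelli is free, by non-negativity) nor of double counting; the genuine issue is that each sum over walks equals $\lambda_{r_*}(x)$ times the probability that an auxiliary DTMC (a forward chain for the out-flux side, a $\pi$-reweighted backward chain for the in-flux side) returns to the core classes in finite time, and if that chain is transient the identity fails with \emph{strict} inequality. The paper's Example with $A\rightleftharpoons U$, $A+U\rightleftharpoons U$ shows this really happens when Condition \ref{con_2} fails: escape to infinity has positive probability, so $\sum_{\gamma\in\Xi_0}\lambda^*_{\cU,\gamma}(z,0)<\Lambda_0(z,0)$ there. Consequently the identity $\sum_{\gamma\in\Xi_0}\lambda^*_{\cU,\gamma}=\sum_{r\in\cR}\lambda_r$ that you quote from the preliminaries cannot be used as an unconditional fact --- it is precisely (a summed version of) what must be proved. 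The paper closes this gap with Lemma \ref{lmm_fnit2}: Condition \ref{con_2}i)/ii) is equivalent to finiteness of the reachable state spaces $\cX_0$, $\cY_0$ of the two auxiliary chains, and finiteness together with irreducibility (from weak reversibility) forces recurrence, hence return probability one. Your proposal offers no substitute for this argument; ``Condition \ref{con_2} keeps the excursions well-behaved'' is the crux of the theorem, not a deferable verification.

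Your part i) is also not sound as written. It is circular (it invokes part ii)), and it rests on the principle that a strictly positive probability solution of the master equation \eqref{def_msteq} forbids transient states. For countable-state CTMCs specified only through the balance equation this is false in general: explosive chains can admit summable positive solutions of $\pi Q=0$ while being transient, so positivity of $\pi_0$ does not by itself make the communicating classes inside $\Gamma_0$ closed. The paper instead proves i) combinatorially and independently of ii): if $x\to_{\cN_{\cU}}x'$, weak reversibility of $\cN$ yields an $\cN$-path from $x'$ back to $x$; chopping that path at its visits to $\cC_0$ decomposes it into finitely many excursions, each lying in $\Xi_0$ and hence each inducing a reduced reaction, whose $\oplus$-sum carries $x'$ back to $x$ in $\cN_{\cU}$. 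That argument uses no probabilistic input and is the route you should take for i).
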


For finite  state space $\Gamma$, the theorem follows from \cite{HW2021}.  
The proof is lengthy and technical, and thus, deferred to Section \ref{prf_sdpi0}.

Combining Lemmas \ref{lmm_fgrn}, \ref{lmm_con1-2} and Theorem \ref{thm_sdpi0}, the next proposition follows.

\begin{corollary}
If $\cN_{\cU}$ has finitely many reactions, then both conclusions in Theorem \ref{thm_sdpi0} are valid.
\end{corollary}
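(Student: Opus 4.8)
The plan is to treat this corollary as a purely logical consequence of the results already in place, with the finiteness hypothesis serving only to certify that the hypotheses of Theorem \ref{thm_sdpi0} are met. The ambient data are inherited from the theorem: $(\cN,\lambda)$ is a weakly reversible SRN, $\cU$ is a set of non-interacting species, $\Gamma$ is an irreducible component with $\Gamma_0\neq\emptyset$, and $\pi$ is a complex balanced distribution on $\Gamma$. Since Theorem \ref{thm_sdpi0} requires Condition \ref{con_2} rather than finiteness of $\cR_{\cU}$, the whole task reduces to deducing Condition \ref{con_2} from the assumption that $\cN_{\cU}$ has finitely many reactions.

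First I would convert the finiteness hypothesis into Condition \ref{con_1}. By definition, $\cN_{\cU}$ having finitely many reactions is exactly the statement that $\cR_{\cU}$ is a finite set, so Lemma \ref{lmm_fgrn} yields Condition \ref{con_1} at once. Second, I would use weak reversibility of $\cN$ together with Lemma \ref{lmm_i=j} to conclude that $\cU$ is eliminable; this is precisely the hypothesis needed to invoke the next lemma. Third, with weak reversibility, eliminability of $\cU$, and Condition \ref{con_1} all in hand, Lemma \ref{lmm_con1-2} delivers Condition \ref{con_2}. At this point every hypothesis of Theorem \ref{thm_sdpi0} has been verified, and applying the theorem directly produces conclusions (i) and (ii).

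The main obstacle, such as it is, is bookkeeping rather than mathematics: one has to make sure the chain of implications is fed the correct hypotheses in the correct order. The one nonobvious link is that Lemma \ref{lmm_con1-2} presupposes that $\cU$ is eliminable, which is not listed explicitly among the hypotheses; this gap is closed by Lemma \ref{lmm_i=j}, whose only input is weak reversibility. Beyond tracking these dependencies, no new estimates or constructions are needed, since the finiteness assumption also guarantees (as noted after Definition \ref{def_grrn}) that $(\cN_{\cU},\lambda_{\cU})$ is genuinely an SRN, so the reduced objects referenced in the theorem's conclusions are well defined.
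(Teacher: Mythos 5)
Your proposal is correct and follows essentially the same route as the paper, which obtains the corollary by combining Lemma \ref{lmm_fgrn} (finiteness of $\cR_{\cU}$ $\Leftrightarrow$ Condition \ref{con_1}), Lemma \ref{lmm_con1-2} (Condition \ref{con_1} $\Rightarrow$ Condition \ref{con_2}), and Theorem \ref{thm_sdpi0}. Your explicit use of Lemma \ref{lmm_i=j} to supply the eliminability hypothesis of Lemma \ref{lmm_con1-2} is a correct piece of bookkeeping that the paper leaves implicit.
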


\subsection{Two special cases}

\begin{definition}
 A set of eliminable non-interacting species $\cU$ is a set of {\it intermediate species}, if $y = \rho(y)$ for every $y\in \cup_{i=1}^m\cC_i$. 
\end{definition}

It can be  demonstrated that both Conditions \ref{con_1} and \ref{con_2} are satisfied. Therefore, Theorem \ref{thm_sdpi0} applies.
For  a set of intermediate species $\cU$, it holds  that for  $i,j \in\{1,\dots, m\}$,  
\begin{enumerate}
 \item[i)]  $\cC_i = \{U_i\}$.  
\item[ii)] $\cR_{i,i} = \emptyset$ and $\cR_{i,j} \subseteq \{U_i\ce{->}U_j\}$,  $i\neq j$.  
\end{enumerate}

\begin{proposition}\label{prop_itms}
 Suppose   $\pi$ is a complex balanced distribution for  $(\cN,\lambda)$ on  an irreducible component $\Gamma$ and that $\cU$ consists of intermediate species. Then, $\Gamma_0\coloneqq \{x\in \Gamma\colon \rho (x) = 0\}$ is an irreducible component of $(\cN_{\cU},\lambda_{\cU})$, and $\pi_{0}(\cdot) \coloneqq  \pi(\cdot)/\pi(\Gamma_{0})$ is a complex balanced distribution for $(\cN_{\cU},\lambda_{\cU})$ on $\Gamma_0$.
\end{proposition}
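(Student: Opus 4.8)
The plan is to exploit the degenerate shape that intermediate species force on the walks in $\Xi_0$ and on the reduced kinetics, and then to verify complex balance of $\pi_0$ one core complex at a time. Because $\cU$ consists of intermediate species, $\cC_i=\{U_i\}$, $\cR_{i,i}=\emptyset$ and $\cR_{i,j}\subseteq\{U_i\to U_j\}$ for $i,j\ge1$, so every $\gamma\in\Xi_0$ is either a single reaction of $\cR_{0,0}$ or a cascade $\eta\to U_{i_1}\to\cdots\to U_{i_q}\to\eta'$ with $\kR(\gamma)=(\eta,\eta')$ and $\eta,\eta'\in\cC_0$; crucially, along such a cascade initiated from a core state $x$ the core coordinates are frozen at $v:=x-\eta$ after the first reaction. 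Since a closed walk among intermediates telescopes to $\kR(\gamma)=(U_i,U_i)$, Conditions \ref{con_1} and \ref{con_2} hold and Theorem \ref{thm_sdpi0} applies, giving that $\Gamma_0$ is an irreducible component of $\cN_\cU$ or a disjoint union of such, and that $\pi_0$ is a (combination of) stationary distribution(s).

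For part (i) I would discard the disjoint-union alternative by proving $x\to_{\cN_\cU}x'$ for all $x,x'\in\Gamma_0$. Irreducibility of $\Gamma$ gives $r_1,\dots,r_k\in\cR$ with $r_1\oplus\cdots\oplus r_k=(y,y')$, $x\ge y$ and $x'-x=y'-y$; since $\rho(x)=0$ and $x\ge y$ we have $\rho(y)=\rho(y')=0$. As the intermediate reactions are unimolecular in $\cU$, the constituent reactions creating and removing each intermediate molecule pair up into cascades, so regrouping rewrites $(y,y')$ as an $\oplus$-sum of elements of $\kR(\Xi_0)$; discarding the trivial $(\eta,\eta)$ factors then witnesses $x\to_{\cN_\cU}x'$. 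Hence $\Gamma_0$ is a single irreducible component.

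For part (ii), homogeneity of \eqref{def_cbd} in $\pi$ (and the fact that reduced reactions keep states inside $\Gamma_0$) lets me replace $\pi_0$ by $\pi$ and check complex balance at each core complex $\eta$ and each $x\in\Gamma_0$. Fixing $\eta$, $x$ and $v=x-\eta$, I introduce the embedded chain of the intermediates at core level $v$: with $(v,e_i)$ the state having core part $v$ and one molecule of $U_i$, set $p_{i\to j}:=\sum_{r\in\cR_{i,j}}\beta_r((v,e_i))$ and $e_{i\to\eta'}:=\sum_{r\in\cR_{i,0},\,\prdt(r)=\eta'}\beta_r((v,e_i))$, which sum to one by \eqref{equ_sbt=1}, and let $h_{i\to\eta'}$ be the probability that the chain started at $U_i$ eventually exits producing $\eta'$. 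Because all $\beta$-factors in \eqref{def_lmd0} are evaluated at core level $v$, the reduced rates collapse to
\[
\lambda_{\cU,\eta\to\eta'}(x)=\sum_{\substack{r\in\cR_{0,0}\\ \reac(r)=\eta,\ \prdt(r)=\eta'}}\lambda_r(x)+\sum_{r_0\colon\reac(r_0)=\eta}\lambda_{r_0}(x)\,h_{i_1(r_0)\to\eta'},
\]
where $i_1(r_0)$ is the target intermediate of $r_0$, with the analogous formula for reactions into $\eta$ (again at core level $v$). Substituting these, using $\sum_{\eta'}h_{i\to\eta'}=1$ to absorb the excluded self-loops $(\eta,\eta)$, and invoking the full complex balance \eqref{def_cbd} at the core complex $\eta$ to cancel the direct $\cR_{0,0}$ terms, the whole identity reduces to the scalar equation $\sum_i f_i\,e_{i\to\eta}=\sum_i c_i\,h_{i\to\eta}$, where $f_i:=\pi((v,e_i))\sum_{r\colon\reac(r)=U_i}\lambda_r((v,e_i))$ is the total out-flux of $U_i$ and $c_i$ is the total flux into $U_i$ from core complexes.

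This scalar identity is the heart of the argument, and I would establish it by an adjoint (Green-type) computation. The full complex balance \eqref{def_cbd} at each intermediate complex $U_i$ reads $f_i=c_i+\sum_j f_j\,p_{j\to i}$, while first-step analysis of the embedded chain gives $h_{i\to\eta}=e_{i\to\eta}+\sum_j p_{i\to j}h_{j\to\eta}$. With $P=(p_{i\to j})$ these are $(I-P^{\top})f=c$ and $(I-P)h=e$, so
\[
\langle c,h\rangle=\langle (I-P^{\top})f,\,h\rangle=\langle f,\,(I-P)h\rangle=\langle f,e\rangle,
\]
which is exactly what is needed. The main obstacle is to make the embedded-chain reduction rigorous: one must confirm that every cascade feeding the balance at $\eta$ shares the core level $v=x-\eta$ (this is where $\cC_i=\{U_i\}$ is indispensable) and that the sums over the arbitrarily long walks in $\Xi_0$ converge, equivalently that $I-P$ is invertible and $h$ well defined. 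The latter is supplied by eliminability: every intermediate is degraded, so the embedded chain exits to the core almost surely and $(I-P)^{-1}=\sum_{k\ge0}P^k$ converges. The only remaining delicacy is the bookkeeping of the self-loop exclusion, handled cleanly by $\sum_{\eta'}h_{i\to\eta'}=1$.
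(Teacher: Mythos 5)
Your proof is correct, and it verifies the same thing the paper does --- the balance equation complex-by-complex over $\eta\in\cC_{\cU}$, with both sides of \eqref{def_cbd} split into a direct $\cR_{0,0}$ part and a cascade part --- but the decisive step is argued by a genuinely different route. The outgoing side is handled identically: your $\sum_{\eta'}h_{i\to\eta'}=1$ is exactly the paper's Step 1 in Section \ref{prf_itms}, where the embedded DTMC $X$ on $\{U_0\}\cup\cU_{\pro}$ exits almost surely by eliminability plus finiteness. The difference is the incoming side. The paper (Step 2) builds a second, $\pi$-weighted ``reversed'' DTMC $Y$, whose transition probabilities are legitimate precisely because $\pi$ is complex balanced at the intermediate complexes, identifies $\mathbb{P}_{U_\iota}(\tau=k)$ with the $k$-step cascade flux ratio $R_{1,k}(\eta)/L_1(\eta)$, and sums over $k$ using recurrence --- with the induction on $k$ only sketched (``by an iteration argument''). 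You instead encode complex balance at the intermediates as $(I-P^{\top})f=c$, first-step analysis as $(I-P)h=e$, and conclude $\langle c,h\rangle=\langle f,(I-P)h\rangle=\langle f,e\rangle$; since $\cU$ is finite, all sums are finite and the transpose pairing is immediate. This replaces the paper's iteration-and-recurrence argument with a one-line adjoint identity, and it makes transparent where the hypothesis $\cC_i=\{U_i\}$ enters, exactly as you flag: only for intermediates do all cascades feeding the balance at $(\eta,x)$ run at the single frozen core level $v=x-\eta$, so one finite matrix $P$ suffices (in the general non-interacting setting of Theorem \ref{thm_sdpi0} the core level shifts along cascades, and the paper must work with the larger state spaces $\cX_0,\cY_0$ and Condition \ref{con_2} to get finiteness). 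Two points deserve a sentence in a polished write-up. First, for the claim that $\Gamma_0$ is a single irreducible component, the paper simply cites \cite[Theorem 5.6]{hoessly2021sum}; your regrouping of a reaction sequence into cascades re-derives that result, but since $\oplus$ is order-sensitive the reordering needs justification (or just cite the reference). Second, your identity $(I-P^{\top})f=c$ invokes \eqref{def_cbd} at the states $(v,e_i)$, which may lie outside $\Gamma$; this is harmless --- any positive term among $f_i$, $c_i$, $f_jp_{j\to i}$ would force $(v,e_i)\in\Gamma$ by the definition of irreducible component, so the equation reads $0=0$ there --- but it should be said.
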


\begin{proposition}\label{prop_rvsb}
 Suppose   $\pi$ is a detailed balanced distribution for $(\cN,\lambda)$ on  an irreducible component $\Gamma$. Then, $\pi_{0}(\cdot) \coloneqq \pi(\cdot)/\pi(\Gamma_{0})$ is either a detailed balanced distribution for $(\cN_{\cU},\lambda_{\cU})$ on $\Gamma_0$, as in \eqref{def_gmm0}, or a linear combination of detailed balanced distributions on disjoint irreducible components.
\end{proposition}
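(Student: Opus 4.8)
The plan is to verify the detailed balance equations for $(\cN_\cU,\lambda_\cU)$ by decomposing each one along the walks $\gamma\in\Xi_0$ that generate the reduced reactions and telescoping the detailed balance relations of $\pi$. Since $\pi$ is detailed balanced, $\cN$ is reversible, hence weakly reversible, so Lemma \ref{lmm_i=j} guarantees that $\cU$ is eliminable and $\cN_\cU$ is defined. The target is, for each $r=y\to y'\in\cR_\cU$ and each $x\in\Gamma_0$, the single-reaction identity $\pi(x)\lambda_{\cU,y\to y'}(x)=\pi(x+y'-y)\lambda_{\cU,y'\to y}(x+y'-y)$; on an irreducible component this is precisely detailed balance for the normalization $\pi_0=\pi/\pi(\Gamma_0)$, and if $\Gamma_0$ splits into several disjoint irreducible components it displays $\pi_0$ as the asserted convex combination of the component-wise detailed balanced distributions.

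First I would prove that $\cN_\cU$ is reversible. A direct computation with the operation \eqref{def_sum} gives $(r_1\oplus r_2)^{-1}=r_2^{-1}\oplus r_1^{-1}$, writing $(y,y')^{-1}\coloneqq(y',y)$, and hence by associativity $\kR(\gamma)^{-1}=\kR(\gamma^{-1})$ for the reversed walk $\gamma^{-1}$. As $\cN$ is reversible, every edge of a walk $\gamma\in\Xi_0$ has a reverse edge, so $\gamma^{-1}\in\Xi_0$; therefore $y\to y'\in\cR_\cU$ forces $y'\to y\in\cR_\cU$, and $\gamma\mapsto\gamma^{-1}$ is an involutive bijection between the walks realizing $y\to y'$ and those realizing $y'\to y$. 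This reversibility also yields the decomposition of $\Gamma_0$ into irreducible components directly, since reachability under $\cN_\cU$ is then symmetric; alternatively one may invoke Theorem \ref{thm_sdpi0}(i), a detailed balanced distribution being in particular complex balanced.

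The core is a per-walk identity. Fix $\gamma=U_0\xrightarrow{r_0}U_{i_1}\xrightarrow{r_1}\cdots\xrightarrow{r_q}U_0$ with $\kR(\gamma)=(y,y')$, set $x_0=x$ and $x_j=x+\sum_{i=1}^{j}\zeta_{r_{i-1}}$, so $x_{q+1}=x+y'-y$. Unwinding \eqref{def_lmd0} and \eqref{def_btijk} for $\lambda^*_{\cU,\gamma}(x_0)$ and for $\lambda^*_{\cU,\gamma^{-1}}(x_{q+1})$, I would check that the products of normalizing denominators $D_i(x)\coloneqq\sum_{k=0}^{m}\sum_{r'\in\cR_{i,k}}\lambda_{r'}(x)$ attached to the two $\beta$-chains coincide after the substitution $l=q-j$, so that both equal $\prod_{j=1}^{q}D_{i_j}(x_j)$. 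Cancelling them reduces the desired equality $\pi(x_0)\lambda^*_{\cU,\gamma}(x_0)=\pi(x_{q+1})\lambda^*_{\cU,\gamma^{-1}}(x_{q+1})$ to $\pi(x_0)\prod_{j=0}^{q}\lambda_{r_j}(x_j)=\pi(x_{q+1})\prod_{j=0}^{q}\lambda_{r_j^{-1}}(x_{j+1})$, which follows by multiplying the detailed balance relations $\pi(x_j)\lambda_{r_j}(x_j)=\pi(x_{j+1})\lambda_{r_j^{-1}}(x_{j+1})$, $j=0,\dots,q$, and telescoping the ratios $\pi(x_{j+1})/\pi(x_j)$. Summing over all $\gamma$ with $\kR(\gamma)=(y,y')$ (the sum converges since $\sum_r\lambda_{\cU,r}\le\sum_r\lambda_r<\infty$) and using the bijection $\gamma\mapsto\gamma^{-1}$ then gives the single-reaction identity for $\lambda_\cU$.

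I expect the main obstacle to be bookkeeping rather than conceptual. The delicate point is matching the source nodes, and hence the denominators $D_{i_j}$, of the forward and reversed $\beta$-factors under index reversal, together with the convention $0/0\coloneqq0$ when some $D_{i_j}(x_j)=0$; there I would observe that a vanishing denominator forces $\lambda_{r_j}(x_j)=0$, so both sides of the per-walk identity vanish by Condition \ref{con_cmp} and the positivity of $\pi$ on $\Gamma$, and no genuine division by zero occurs. A secondary care is the identification of states: the intermediate $x_1,\dots,x_q$ carry nonzero non-interacting counts and lie in $\Gamma$, so detailed balance of $\pi$ is invoked on all of $\Gamma$, whereas $x_0,x_{q+1}\in\Gamma_0$ because $y'-y$ has no non-interacting component.
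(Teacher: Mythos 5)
Your proposal is correct and takes essentially the same route as the paper: both arguments reduce the claim to the per-walk identity $\pi(x)\lambda^*_{\cU,\gamma}(x)=\pi(x+\zeta_{\kR(\gamma)})\,\lambda^*_{\cU,\gamma^{-1}}(x+\zeta_{\kR(\gamma)})$, obtained from the detailed balance relations of $\pi$ along the states $x_0,\dots,x_{q+1}$ of the walk, and then sum over all $\gamma\in\Xi_0$ realizing a given reduced reaction using the bijection $\gamma\mapsto\gamma^{-1}$. The only cosmetic differences are that the paper invokes \cite[Theorem 5.1]{hoessly2021sum} for reversibility of $\cN_{\cU}$ where you derive it from $(r_1\oplus r_2)^{-1}=r_2^{-1}\oplus r_1^{-1}$, and that the paper proves the per-walk identity by iterated substitutions that generate the reversed $\beta$-factors one at a time, whereas you cancel the common denominator product $\prod_{j=1}^{q}D_{i_j}(x_j)$ outright and telescope --- the same computation, organized slightly more transparently.
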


The proofs of Propositions \ref{prop_itms} and \ref{prop_rvsb} are in Sections \ref{prf_itms} and \ref{prf_rvsb}, respectively.

  \subsection{Positive recurrence of the   reduced SRN}

 \begin{theorem}\label{cons_pos_red}
Let $(\cN,\lambda)$ be an SRN with stochastic mass-action kinetics, $\cU$ a set of non-interacting species, $(\cN_{\cU},\lambda_{\cU})$ the   reduced SRN obtained by elimination of the species in $\cU$, and $\Gamma$ an irreducible component of $(\cN,\lambda)$.
 Let $\pi$ be a complex balanced stationary distribution of $(\cN,\lambda)$  on $\Gamma$, and assume $\pi_{0}(\cdot) \coloneqq \pi(\cdot)/\pi(\Gamma_{0})$ is a stationary distribution (linear combination of stationary distributions on disjoint irreducible components) 
 of $(\cN_{\cU},\lambda_{\cU})$ supported on $\Gamma_0\not=\emptyset$, as in \eqref{def_gmm0-nint}. Then, $(\cN_{\cU},\lambda_{\cU})$ is positive recurrent on each irreducible component of $\Gamma_0$.
\end{theorem}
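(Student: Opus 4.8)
The plan is to reduce the claim to the standard fact that an irreducible continuous-time Markov chain which admits a stationary distribution and is \emph{non-explosive} is positive recurrent, so that the only real content is to rule out explosion of the reduced chain. Since $\lambda_{\cU}$ is not mass-action (indeed not even polynomial), I would not attempt a Foster--Lyapunov or mass-action non-explosivity criterion directly on $(\cN_{\cU},\lambda_{\cU})$; instead I would pass to the embedded jump chain and show it carries a \emph{finite} invariant measure, which forces recurrence of the jump chain and hence non-explosivity of the reduced chain.

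By Theorem~\ref{thm_sdpi0}, $\Gamma_0$ is a countable disjoint union of irreducible components $\Gamma_0^{(k)}$ of $(\cN_{\cU},\lambda_{\cU})$, and since the complex balanced $\pi$ is strictly positive on $\Gamma\supseteq\Gamma_0$, the normalized restriction $\pi_0^{(k)}(\cdot)\coloneqq\pi_0(\cdot)/\pi_0(\Gamma_0^{(k)})$ is a stationary distribution, strictly positive on $\Gamma_0^{(k)}$. Fix such a component; if it is a singleton it is absorbing (reactions with $y=y'$ are excluded in Definition~\ref{def_grrn}) and trivially positive recurrent, so assume the total exit rate $q_{\cU}(x)\coloneqq\sum_{r\in\cR_{\cU}}\lambda_{\cU,r}(x)$ satisfies $0<q_{\cU}(x)<\infty$ on $\Gamma_0^{(k)}$ by Condition~\ref{con_cmp}. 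The stationarity identity \eqref{def_msteq} for $\pi_0^{(k)}$ reads $\pi_0^{(k)}Q_{\cU}=0$, where $Q_{\cU}$ is the generator; a direct rearrangement then shows that $m(x)\coloneqq\pi_0^{(k)}(x)\,q_{\cU}(x)$ is an invariant measure for the embedded jump chain $P_{\cU}(x,y)=q_{\cU}(x)^{-1}\sum_{r\in\cR_{\cU},\,x+\zeta_r=y}\lambda_{\cU,r}(x)$ on $\Gamma_0^{(k)}$.

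The key step is to show that $m$ is a \emph{finite} measure, i.e.\ $\sum_{x}\pi_0^{(k)}(x)\,q_{\cU}(x)<\infty$, and this is exactly where the mass-action hypothesis on $(\cN,\lambda)$ is used. Because $\pi$ is complex balanced for a mass-action SRN, it is of Poisson product form on $\Gamma$ by \cite[Theorem~4.1]{anderson2}, hence light-tailed with all factorial moments finite; since the original total rate $x\mapsto\sum_{r\in\cR}\lambda_r(x)$ is polynomial, we get $\sum_{x\in\Gamma}\pi(x)\sum_{r\in\cR}\lambda_r(x)<\infty$. Combining the rate domination $q_{\cU}\le\sum_{r\in\cR}\lambda_r$ from \cite[Lemma~3.10]{HW2021} with $\pi_0^{(k)}(x)=\pi(x)\big/\big(\pi(\Gamma_0)\,\pi_0(\Gamma_0^{(k)})\big)$ on $\Gamma_0^{(k)}\subseteq\Gamma$ yields
\[
\sum_{x\in\Gamma_0^{(k)}}\pi_0^{(k)}(x)\,q_{\cU}(x)\le\frac{1}{\pi(\Gamma_0)\,\pi_0(\Gamma_0^{(k)})}\sum_{x\in\Gamma}\pi(x)\sum_{r\in\cR}\lambda_r(x)<\infty .
\]

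Finally, an irreducible discrete-time chain possessing a finite invariant measure is positive recurrent, so $P_{\cU}$ is recurrent on $\Gamma_0^{(k)}$; recurrence of the jump chain implies the reduced CTMC is non-explosive there (the total holding time at any fixed state is an infinite sum of i.i.d.\ exponentials, hence infinite a.s.). The corresponding CTMC invariant measure is $m(x)/q_{\cU}(x)=\pi_0^{(k)}(x)$, a probability measure, so the non-explosive recurrent chain is positive recurrent on $\Gamma_0^{(k)}$; as $k$ is arbitrary, $(\cN_{\cU},\lambda_{\cU})$ is positive recurrent on every irreducible component of $\Gamma_0$. I expect the main obstacle to be precisely the finiteness estimate $\sum_x\pi_0^{(k)}(x)\,q_{\cU}(x)<\infty$: positive recurrence of a CTMC does not by itself make the mean jump rate integrable, and it is the complex-balanced-plus-mass-action structure (a light-tailed product-Poisson $\pi$ integrated against polynomial rates) that makes this estimate hold and thereby rules out explosion of the non-standard reduced kinetics.
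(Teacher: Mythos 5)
Your proof is correct and follows essentially the same route as the paper: the crux in both is the finiteness estimate $\sum_{x}\pi_0(x)\sum_{r\in\cR_{\cU}}\lambda_{\cU,r}(x)<\infty$, obtained from the Poisson product form of the complex balanced $\pi$ under mass-action kinetics \cite{anderson2}, the rate domination $\sum_{r\in\cR_{\cU}}\lambda_{\cU,r}(z)\le\sum_{r\in\cR}\lambda_{r}(z,0)$ of \cite[Lemma 3.10]{HW2021}, and integrability of polynomial rates against Poisson tails. The only difference is presentational: the paper invokes this condition as a ready-made sufficient criterion for positive recurrence via \cite[Corollary 4.4]{asmussen}, whereas you rederive that criterion from scratch (finite invariant measure for the embedded jump chain, hence recurrence of the jump chain, hence non-explosivity, hence positive recurrence), which is a correct unpacking of the same fact.
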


\begin{proof} 
By assumption, $\pi_{0}$ is a stationary distribution on an  irreducible component $\tilde{\Gamma}\subseteq\Gamma_0$. To show positive recurrence on $\tilde{\Gamma}$, we apply the sufficient condition for positive recurrence from \cite[Corollary 4.4]{asmussen}.  The condition holds if for $\tilde{\Gamma}$ the following holds:
 $$\sum_{z\in\tilde{\Gamma}}\sum_{r\in\mR_{\cU}}\lambda_{\cU,r}(z)\pi_0(z)<\infty.$$ 
Using that complex balanced distributions for mass-action SRNs take  the form $M\frac{c^x}{x!}$ \cite{anderson2}, we can replace  $\pi_0(z)=\pi(z)/\pi(\Gamma_{0})$  by $M_0\frac{c^x}{x!}$ with $M_0=M/\pi(\Gamma_0)$ and $c^x \coloneqq \prod_{i=1}^n c_i^{x_i}$, for any $c \in \R_+^n$ and $x\in \R^n$. Furthermore,  by \cite[Lemma 3.10]{HW2021},
$$\sum_{r\in\mR_\cU}\lambda_{\cU,r}(z)\leq \sum_{r\in\mR}\lambda_{r}(z,0)$$
Since $\tilde{\Gamma}\subseteq\Gamma_0\subseteq \N^{n_\cU}_0$ and   $\mR$ is finite, it is enough to show that
$$M_0\sum_{z\in\Z^{n_\cU}_{\geq 0}}\lambda_{r}(z,0)\frac{c^{(z,0)}}{z!}<\infty. $$
This final inequality follows as $\lambda_{r}(z,0)$ is a polynomial of fixed degree  (in $z$) for stochastic mass-action kinetics.
\end{proof}

As a corollary of   Theorem \ref{thm_sdpi0}, Proposition \ref{prop_itms}, Proposition \ref{prop_rvsb}, combined with Theorem \ref{cons_pos_red}, we get the following for  reductions of complex balanced SRNs with stochastic mass-action kinetics. 

\begin{corollary}
Let $(\cN,\lambda)$ be an SRN with stochastic mass-action kinetics, $\cU$ a set of non-interacting species, $(\cN_{\cU},\lambda_{\cU})$ the   reduced SRN obtained by elimination of the species in $\cU$, and $\Gamma$ an irreducible component of $(\cN,\lambda)$.  Let $\pi$ be a complex balanced stationary distribution of $(\cN,\lambda)$ on $\Gamma$.  
Assume that at least one of the following conditions hold: 
\begin{itemize}
\item Condition \ref{con_2},
\item $\cU$ consists of intermediate species,
\item $\cN$ is reversible.
\end{itemize}
Then, $(\cN_{\cU},\lambda_{\cU})$ is positive recurrent on each irreducible component of $\Gamma_0$.
\end{corollary}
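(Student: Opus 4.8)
The plan is to read this statement as essentially a bookkeeping corollary: each of the three listed conditions is precisely the hypothesis needed to invoke one of the preceding results (Theorem \ref{thm_sdpi0}, Proposition \ref{prop_itms}, or Proposition \ref{prop_rvsb}), and the common output of all three is exactly the standing assumption of Theorem \ref{cons_pos_red} --- that the conditional distribution $\pi_0(\cdot)=\pi(\cdot)/\pi(\Gamma_0)$ is a stationary distribution of $(\cN_\cU,\lambda_\cU)$ on $\Gamma_0$, or a linear combination of stationary distributions on disjoint irreducible components of $\Gamma_0$. So the only genuine work is to verify, case by case, that $\pi_0$ has this property; once that is recorded, Theorem \ref{cons_pos_red} applies verbatim on each irreducible component of $\Gamma_0$ and delivers positive recurrence. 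Since Theorem \ref{cons_pos_red} presupposes $\Gamma_0\neq\emptyset$ while the corollary does not state it, I would first note that the assertion is vacuous when $\Gamma_0=\emptyset$, and assume $\Gamma_0\neq\emptyset$ henceforth.

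For the case analysis I would proceed as follows. Under Condition \ref{con_2} I first observe that, because $\pi$ is complex balanced and strictly positive on $\Gamma$, every complex that can act as a reactant in $\Gamma$ must also be a target of some reaction (else one side of \eqref{def_cbd} vanishes identically while the other is positive); hence $\cN$ is weakly reversible, so by Lemma \ref{lmm_i=j} the set $\cU$ is eliminable and $(\cN_\cU,\lambda_\cU)$ is well defined. Theorem \ref{thm_sdpi0} then applies directly and yields both that $\Gamma_0$ is an irreducible component (or a countable disjoint union of such) of $\cN_\cU$ and that $\pi_0$ is the associated stationary distribution (or linear combination). If instead $\cU$ consists of intermediate species, Proposition \ref{prop_itms} shows $\Gamma_0$ is a single irreducible component and that $\pi_0$ is even complex balanced there, hence stationary. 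In the reversible case I would invoke Proposition \ref{prop_rvsb}, which gives that $\pi_0$ is detailed balanced (or a linear combination of detailed balanced distributions on disjoint components) for $(\cN_\cU,\lambda_\cU)$, and a detailed balanced distribution is in particular stationary. In all three cases the hypothesis of Theorem \ref{cons_pos_red} is thereby secured.

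With $\pi_0$ identified as a stationary distribution on each irreducible component $\tilde\Gamma\subseteq\Gamma_0$, the conclusion follows by applying Theorem \ref{cons_pos_red} to that component one at a time; the mass-action hypothesis is inherited, so the summability criterion invoked in its proof --- finiteness of $\sum_{z}\sum_{r}\lambda_{\cU,r}(z)\pi_0(z)$, controlled by $\sum_r\lambda_r(z,0)$ together with the Poisson product form $M c^x/x!$ --- goes through unchanged. The step I expect to demand the most care is the reversible case: the literal hypothesis ``$\cN$ is reversible'' must be aligned with the \emph{detailed balance} hypothesis of Proposition \ref{prop_rvsb} (reversibility plus complex balance alone need not give detailed balance, so one must use that the relevant distribution here is detailed balanced), and one must keep track of the possible splitting of $\Gamma_0$ into several disjoint irreducible components, asserting positive recurrence on each separately rather than on $\Gamma_0$ as a whole. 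The remaining two cases are immediate once weak reversibility and eliminability have been recorded.
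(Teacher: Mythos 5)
Your proposal matches the paper's own proof: the corollary is presented there as an immediate consequence of chaining Theorem \ref{thm_sdpi0}, Proposition \ref{prop_itms}, and Proposition \ref{prop_rvsb} (case by case, to conclude that $\pi_0$ is a stationary distribution, or a linear combination of stationary distributions on disjoint irreducible components) with Theorem \ref{cons_pos_red}, exactly as you do. Your extra bookkeeping --- noting vacuity when $\Gamma_0=\emptyset$, deriving weak reversibility from the existence of a positive complex balanced distribution before invoking Theorem \ref{thm_sdpi0}, and flagging that the reversible case really needs $\pi$ to be detailed balanced for Proposition \ref{prop_rvsb} to apply --- is care the paper does not record, so your write-up is, if anything, more complete than the original.
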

 
\subsection{A remark on Theorem \ref{thm_sdpi0}}
  
The proof of Theorem \ref{thm_sdpi0}ii) presented in Section \ref{prf_sdpi0} is based on the recurrence of two irreducible discrete-time Markov chains (DTMCs). Recurrence is deduced from the finiteness of the state spaces (of the DTMCs), as a result of Condition \ref{con_2}. It is worth noticing that  finiteness of the state space is not a necessary condition for  recurrence. However, at present, we have not been able to identify any other simple condition that  imply  recurrence of these DTMCs. The  non-necessity of Condition \ref{con_2} is demonstrated in the following example, although Theorem \ref{thm_sdpi0}ii) remains valid in this particular case.

\begin{example}
Consider the next reaction network
\begin{align}\label{exp_count}
A\ce{<=>[\lambda_1][\lambda_2]}U,\quad A+U\ce{<=>[\lambda_3][\lambda_4]}U,
\end{align}
where for all $x=(x_A,x_U)\in \N_0^2$,
\begin{equation*}
\begin{aligned}
\lambda_1(x)=&(x_A!)^2\1_{\N}(x_A),\\
\lambda_2(x)=&[(x_A+1)!]^2\1_{\N}(x_U),
\end{aligned}\qquad
\begin{aligned}
\lambda_3(x)=&2[(x_A+1)!]^2\1_{\N^2}(x_A,x_U),\\
 \lambda_4(x)=&[(x_A+2)!]^2\1_{\N}(x_U).
\end{aligned}
\end{equation*}
Then, $\Gamma=\N_0^2\setminus \{(0,0)\}$ is an irreducible component of \eqref{exp_count}. One can show that
\[
\pi(x)\coloneqq \frac{1}{3\times 2^{x_A+x_U}},\quad   x = (x_A,x_U)\in \Gamma,
\] 
is a detailed balanced distribution for \eqref{exp_count} on $\Gamma$.

 Following the same strategy as in  Step 1 of the proof of Theorem \ref{thm_sdpi0}; see in Section \ref{prf_sdpi0}, we construct a DTMC $X$ on $\cX = \{(x_A, 1)\colon x_A \geq 0\}\cup \{\partial\}$ (where $\partial$ is an additional state) with transition probability
\begin{align*}
P_{\partial,\partial} = 0,\quad P_{\partial, (x - 1, 1)} = 1,\quad P_{(x_A,1),\partial} = \frac{\lambda_2(x_A,1)}{(\lambda_2+\lambda_3+\lambda_4)(x_A,1)},
\end{align*}
\begin{align*}
P_{(x_A,1),(x_A + 1,1)} = \frac{\lambda_4(x_A,1)}{(\lambda_2+\lambda_3+\lambda_4)(x_A,1)},\quad P_{(x_A,1), (x_A - 1,1)} = \frac{\lambda_3(x_A,1)}{(\lambda_2+\lambda_3+\lambda_4)(x_A,1)},
\end{align*}
and otherwise $P_{z,z'} = 0$. Then, Theorem \ref{thm_sdpi0}ii) is valid if $\mathbb{P}_{(x-1,1)} (\tau < \infty) = 1$, where $\tau \coloneqq \min\{ n\geq 1\colon X_n = \partial\}$. However, this is unfortunately not true. In fact, for any $n\geq 2$,
\begin{align*}
\mathbb{P}_{(x-1,1)}(\tau > n) \geq &\mathbb{P}_{(x - 1,1)}\big(X_k=(x + k-1, 1),\ \forall k=1,\dots,n\big)\\
=& \prod_{k=1}^{n} P_{(x + k -2,1),(x + k - 1, 1)} = \prod_{k = 1}^{n} \frac{\lambda_4(x + k-2,1)}{(\lambda_2+\lambda_3+\lambda_4)(x + k - 2,1)}\\
=&\prod_{k=1}^{n}\frac{\prod_{j=1}^{x + k}j^2}{\big(\prod_{j=1}^{x + k - 1}j^2 \big) + \big(2\prod_{j=1}^{x + k - 1}j^2 \1_{\N}(x + k - 2) \big) + \big(\prod_{j=1}^{x + k} j^2\big) }\\
\geq &\prod_{k=1}^{n}\big(3 (x + k)^{-2}+1\big)^{-1}.
\end{align*}
By using the Taylor expansion for the logarithmic function, one can show that
\[
\log \Big(\prod_{k=1}^{\infty}\big(3 (x + k)^{-2}+1\big)^{-1}\Big) = - \sum_{k=1}^{\infty}\log \big(3(x + k)^{-2} + 1\big) \geq -\sum_{k=1}^{\infty} 3(x+k)^{-2} > -\infty.
\]
This implies $\prod_{k=1}^{\infty}\big(3(x + k )^{-2}+1\big)^{-1}>0$. Therefore, $\mathbb{P}_{(x - 1,0)}(\tau = \infty)>0$. 

  This implies that simply replicating the arguments in the proof of Theorem \ref{thm_sdpi0} does not suffice to show that $\pi_0$ is a stationary distribution for the   reduced SRN. This is due to the failure of fulfilling Condition \ref{con_2} in \eqref{exp_count}. However, applying Proposition \ref{prop_rvsb},   the reduce SRN  is not only stationary but also detailed balanced with $\pi_0$. We conjecture that Condition \ref{con_2} can be removed in Theorem \ref{thm_sdpi0} but are unable to prove it. 
\end{example}

\section{Convergence of complex balanced distributions} 

Consider an SRN $(\cN,\lambda)$ with $\cN=(\cS,\cC,\cR)$, and suppose $\cU$ is a proper subset of $\cS$  with $|\cU|=m$. 
  In particular,  $\cU$ does not need to be a set of non-interacting species.

Let  $\beta=(\beta_1,\dots, \beta_m)\in\R^m_{>0}$. For  $N\in\N$, let $\lambda_{N}^{\beta}=(\lambda_{N,y\to y'}^{\beta}\colon y\ce{->}y'\in \cR)$ be a  kinetics on $\cN$ defined by
\begin{align}\label{def_lmdn}
\lambda_{N, y\to y'}^{\beta}=N^{\langle \beta, \rho (y)\rangle}\lambda_{y\to y'},\quad  y\ce{->}y'\in\cR,
\end{align}
where $\rho$ is the projection from $\N_0^n \to \N_0^m$ defined   in \eqref{def_rho}.

\begin{proposition}\label{thm_sdsc}
 Suppose that $\pi$ is a complex balanced distribution for $(\cN,\lambda)$ on  an irreducible component $\Gamma$. 
Define $g_{N}^{\beta}(x)=N^{-\langle\beta, \rho (x) \rangle}\pi(x)$ for $N\in \N$.
Then, $M_{N}^{\beta}=\sum_{x\in \Gamma}g_{N}^{\beta}(x)\in (0,1)$ and $\pi_{N}^{\beta}(x)=\frac{1}{M_{N}^{\beta}}g_{N}^{\beta}(x)$ is a complex balanced distribution for $(\cN,\lambda_N^\beta)$ on $\Gamma$.
\end{proposition}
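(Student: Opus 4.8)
The plan is to verify directly that $\pi_N^\beta$ satisfies the complex balanced equations \eqref{def_cbd} for the scaled kinetics $\lambda_N^\beta$, by reducing them to the corresponding equations for $\pi$ and $\lambda$, which hold by hypothesis. First I would check the elementary claim that $M_N^\beta \in (0,1)$: since $g_N^\beta(x) = N^{-\langle \beta, \rho(x)\rangle}\pi(x)$ with $\beta \in \R_{>0}^m$ and $\rho(x)\in\N_0^m$, every factor $N^{-\langle\beta,\rho(x)\rangle}$ lies in $(0,1]$, so $0 < g_N^\beta(x) \le \pi(x)$ for each $x\in\Gamma$ with strict inequality whenever $\rho(x)\neq 0$, and summing over $\Gamma$ gives $0 < M_N^\beta \le \sum_{x\in\Gamma}\pi(x) = 1$. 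Strictness of the upper bound needs the existence of at least one $x\in\Gamma$ with $\rho(x)>0$; I would note this holds because $\cU$ is a proper subset whose species appear in some complex, so the irreducible component contains states with non-zero $\cU$-counts (if no such state exists the scaling is trivial and $M_N^\beta=1$, a degenerate case worth a remark). Hence $\pi_N^\beta = g_N^\beta/M_N^\beta$ is a genuine probability distribution on $\Gamma$.

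The heart of the proof is the complex balanced identity. Fix a complex $\eta\in\cC$ and a state $x\in\Gamma$; I want to show
\begin{align*}
\pi_N^\beta(x)\sum_{y'\colon \eta\to y'\in\cR}\lambda_{N,\eta\to y'}^\beta(x) = \sum_{y\colon y\to\eta\in\cR}\pi_N^\beta(x+y-\eta)\lambda_{N,y\to\eta}^\beta(x+y-\eta).
\end{align*}
The key observation is that the scaling factor is governed entirely by $\rho(\eta)$. On the left, each outgoing reaction $\eta\to y'$ carries kinetics $\lambda_{N,\eta\to y'}^\beta = N^{\langle\beta,\rho(\eta)\rangle}\lambda_{\eta\to y'}$, and $\pi_N^\beta(x) = \frac{1}{M_N^\beta}N^{-\langle\beta,\rho(x)\rangle}\pi(x)$, so the left side equals $\frac{1}{M_N^\beta}N^{\langle\beta,\rho(\eta)\rangle - \langle\beta,\rho(x)\rangle}\pi(x)\sum_{y'}\lambda_{\eta\to y'}(x)$. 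On the right, the incoming reaction $y\to\eta$ is evaluated at the shifted state $x+y-\eta$: its kinetics is $N^{\langle\beta,\rho(y)\rangle}\lambda_{y\to\eta}(x+y-\eta)$ and $\pi_N^\beta(x+y-\eta) = \frac{1}{M_N^\beta}N^{-\langle\beta,\rho(x+y-\eta)\rangle}\pi(x+y-\eta)$. The combined power of $N$ in each summand is $\langle\beta,\rho(y)\rangle - \langle\beta,\rho(x+y-\eta)\rangle = \langle\beta,\rho(y) - \rho(y) + \rho(\eta) - \rho(x)\rangle = \langle\beta,\rho(\eta)\rangle - \langle\beta,\rho(x)\rangle$, using that $\rho$ is linear (it is the restriction of a coordinate projection). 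Crucially this exponent is identical to the one appearing on the left and is \emph{independent of $y$}.

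Since the same scalar $\frac{1}{M_N^\beta}N^{\langle\beta,\rho(\eta)\rangle-\langle\beta,\rho(x)\rangle}$ multiplies both sides, the desired identity reduces to
\begin{align*}
\pi(x)\sum_{y'\colon\eta\to y'\in\cR}\lambda_{\eta\to y'}(x) = \sum_{y\colon y\to\eta\in\cR}\pi(x+y-\eta)\lambda_{y\to\eta}(x+y-\eta),
\end{align*}
which is exactly \eqref{def_cbd} for the original complex balanced pair $(\pi,\lambda)$ and therefore holds by hypothesis. The main point to get right — and the step I expect to require the most care — is the bookkeeping of the $N$-exponent for the shifted argument on the right-hand side, namely that $\rho(x+y-\eta) = \rho(x) + \rho(y) - \rho(\eta)$, which is where the linearity of $\rho$ and the cancellation enters; everything else is routine factoring of a common scalar. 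I would also remark that $\Gamma$ remains an irreducible component of $(\cN,\lambda_N^\beta)$ because the scaled kinetics $\lambda_{N,r}^\beta$ is strictly positive exactly when $\lambda_r$ is (the factor $N^{\langle\beta,\rho(\reac r)\rangle}$ is always positive), so Condition \ref{con_cmp} is preserved and the reachability structure is unchanged, which justifies speaking of a complex balanced distribution on $\Gamma$.
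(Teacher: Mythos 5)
Your proposal is correct and follows essentially the same route as the paper: factor the common power $N^{\langle\beta,\rho(\eta)\rangle-\langle\beta,\rho(x)\rangle}$ (the paper writes it as $N^{-\langle\beta,\rho(x-\eta)\rangle}$) out of both sides of the scaled complex balance equation using linearity of $\rho$, and reduce to \eqref{def_cbd} for $(\pi,\lambda)$, with the bound $0<g_N^\beta(x)\le\pi(x)$ giving $M_N^\beta$. If anything, you are slightly more careful than the paper on the strictness $M_N^\beta<1$, which indeed requires some $x\in\Gamma$ with $\rho(x)\neq 0$.
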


\begin{proof}
Since $\pi$ is a complex balanced distribution for $(\cN,\lambda)$ on $\Gamma$, then \eqref{def_cbd} states that for any $\eta\in \cC$ and $x\in\Gamma$,
\begin{align*}
\pi(x)\sum_{y'\colon \eta\to y'\in \cR}\lambda_{\eta\to y'}(x)=\sum_{y\colon y\to \eta\in \cR}\pi(x+y-\eta)\lambda_{y\to \eta}(x+y-\eta).
\end{align*}
By definition of $\lambda_N^{\beta}$ and $g_N^{\beta}$, and by  linearity of $\rho $, we have
\begin{align*}
g_N^{\beta}(x)\sum_{y'\colon \eta\to y'\in \cR}&\lambda_{N,\eta\to y'}^{\beta}(x)=N^{-\langle \beta,\rho (x-\eta)\rangle}\pi(x)\sum_{y'\colon \eta\to y'\in \cR}\lambda_{\eta\to y'}(x)\\
& =N^{-\langle \beta,\rho (x-\eta)\rangle} \sum_{y\colon y\to \eta\in \cR}\pi(x+y-\eta)\lambda_{y\to \eta}(x+y-\eta)\\
& =\sum_{y\colon y\to \eta\in \cR} N^{-\langle \beta, \rho (x+y-\eta) \rangle} \pi(x+y-\eta)\  N^{\langle \beta, \rho (y) \rangle} \lambda_{y\to \eta}(x+y-\eta) \\
 & =\sum_{y\colon y\to \eta\in \cR}g_N^{\beta}(x+y-\eta)\lambda_{N,y\to \eta}(x+y-\eta),
\end{align*}
provided that $x\geq \eta$. Furthermore, for $x\not\geq \eta$, 
\[
0 = g_N^{\beta}(x) \sum_{y'\colon \eta\to y'\in \cR} \lambda_{N,\eta\to y'}^{\beta}(x) = \sum_{y\colon y\to \eta\in \cR} g_N^{\beta}(x+y-\eta) \lambda_{N,y\to \eta}^{\beta}(x+y-\eta).
\]
Note that $g_N^{\beta} (x) \leq \pi(x)$ for all $x\in \Gamma$, and $g_N^{\beta}(x)>0$ if and only if $\pi(x)>0$. This implies that $M_{N}^{\beta}=\sum_{x\in \Gamma}g_{N}^{\beta}(x)\in (0,1)$ and $\pi_N$ is a complex balanced distribution for $(\cN,\lambda_N^{\beta})$ on $\Gamma$. The proof is complete.
\end{proof}

We have the following theorem of the asymptotic behaviour of $\pi_{N}^{\beta}$. 

\begin{theorem}\label{cbal_red}
 Assume the conditions in Proposition \ref{thm_sdsc}. Let 
 \begin{align}\label{def_gmm0}
 \gamma_0^{\beta} = \min\{\langle\beta, \rho (x)\rangle\colon x\in \Gamma\},\quad\text{and}\quad\Gamma_0^{\beta}=\{x\in \Gamma\colon \langle\beta, \rho (x)\rangle=\gamma_0^{\beta}\}.
 \end{align}
Then, for every $x\in\Gamma$,
\begin{align}\label{lmtsd}
\pi_0^{\beta}(x) = \lim_{N\to\infty} \pi_{N}(x) &= \begin{cases}\pi(x)/\pi(\Gamma_0^{\beta}), &{\rm if}\ x\in \Gamma_0^{\beta}, \\
0, & {\rm if}\ x\in \Gamma\setminus\Gamma_0^{\beta}.
\end{cases}
\end{align}
\end{theorem}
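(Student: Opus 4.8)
The plan is to read off the limit directly from the closed form $\pi_N^\beta(x)=g_N^\beta(x)/M_N^\beta = N^{-\langle\beta,\rho(x)\rangle}\pi(x)\big/M_N^\beta$ provided by Proposition~\ref{thm_sdsc}, after rescaling numerator and denominator by the common factor $N^{\gamma_0^\beta}$.

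Before anything else, I would confirm that the minimum defining $\gamma_0^\beta$ is attained, so that $\Gamma_0^\beta\neq\emptyset$. Because $\beta\in\R^m_{>0}$ and $\rho(x)\in\N_0^m$, the value $\langle\beta,\rho(x)\rangle$ is non-negative and, for any bound $C$, the inequality $\langle\beta,u\rangle\leq C$ forces $u_i\leq C/\beta_i$ for each $i$; hence only finitely many $u\in\N_0^m$ lie below any given threshold. Consequently $\inf_{x\in\Gamma}\langle\beta,\rho(x)\rangle$ is achieved and $\Gamma_0^\beta$ is a non-empty subset of $\Gamma$.

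Next, I would set $d(x)\coloneqq\langle\beta,\rho(x)\rangle-\gamma_0^\beta\ge 0$, noting that $d(x)=0$ precisely when $x\in\Gamma_0^\beta$, and rewrite
\[
\pi_N^\beta(x)=\frac{N^{-d(x)}\pi(x)}{\sum_{y\in\Gamma}N^{-d(y)}\pi(y)}.
\]
The numerator tends to $\pi(x)$ when $x\in\Gamma_0^\beta$ and to $0$ when $x\in\Gamma\setminus\Gamma_0^\beta$, since in the latter case $d(x)>0$ and $N^{-d(x)}\to 0$.

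The one step that genuinely requires care---because $\Gamma$ may be countably infinite---is evaluating the limit of the denominator. Here I would invoke dominated convergence for series: for every $N\ge 1$ and $y\in\Gamma$ we have $0\le N^{-d(y)}\pi(y)\le\pi(y)$ (as $d(y)\ge 0$), and $\sum_{y\in\Gamma}\pi(y)=1<\infty$ supplies a summable majorant independent of $N$. Passing the limit inside the sum and using $N^{-d(y)}\to\1_{\Gamma_0^\beta}(y)$ pointwise gives $\sum_{y\in\Gamma}N^{-d(y)}\pi(y)\to\sum_{y\in\Gamma_0^\beta}\pi(y)=\pi(\Gamma_0^\beta)$. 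Combining the numerator and denominator limits yields \eqref{lmtsd}. I expect this interchange of limit and infinite summation to be the only real obstacle, and the uniform bound $\pi(y)$ dispatches it.
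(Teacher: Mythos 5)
Your proof is correct and follows essentially the same route as the paper: rescale numerator and denominator by $N^{\gamma_0^{\beta}}$ so that only the terms indexed by $\Gamma_0^{\beta}$ survive in the limit. You are in fact slightly more careful than the paper's own proof, which leaves implicit both the attainment of the minimum defining $\gamma_0^{\beta}$ and the dominated-convergence justification for interchanging the limit with the infinite sum over $\Gamma$.
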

\begin{proof}
Due to Proposition \ref{thm_sdsc}, we have
\[
\pi_{N}(x)=\frac{N^{-\langle\beta, \rho (x) \rangle} \pi(x)}{\sum_{x'\in \Gamma}N^{-\langle\beta, \rho (x')\rangle} \pi(x')}.
\]
Decompose $\Gamma=\Gamma_0^{\beta}\cup (\Gamma\setminus \Gamma_0^{\beta})$. Then it follows that
\[
\sum_{x'\in \Gamma}N^{-\langle\beta, \rho (x')\rangle} \pi(x')=\sum_{x'\in \Gamma_0^{\beta}}N^{-\gamma_0^{\beta}} \pi(x')+\sum_{x'\in \Gamma\setminus \Gamma_0^{\beta}}N^{-\langle\beta, \rho (x')\rangle} \pi(x'),
\]
and thus
\begin{align*}
\pi_{N}(x)=&\frac{N^{-\langle\beta, \rho (x) \rangle} \pi(x)}{\sum_{x'\in \Gamma_0^{\beta}}N^{-\gamma_0^{\beta}} \pi(x')+\sum_{x'\in \Gamma\setminus \Gamma_0^{\beta}}N^{-\langle\beta, \rho (x')\rangle} \pi(x')}\\
&=\frac{N^{\gamma_0^{\beta}-\langle \beta, \rho (x)\rangle } \pi(x)}{\sum_{x'\in \Gamma_0^{\beta}} \pi(x')+\sum_{x'\in \Gamma\setminus \Gamma_0^{\beta}}N^{\gamma_0^{\beta}-\langle\beta, \rho (x')\rangle} \pi(x')}.
\end{align*}
Note that $\Gamma$ is an irreducible component of $\cN$, and $\pi$ is a complex balanced distribution for $(\cN,\lambda)$. Thus, $\pi(x)>0$ for all $x\in \Gamma$. Therefore, for $N\to\infty$,   \eqref{lmtsd} follows from the fact that on the set $\Gamma$, $\gamma_0^{\beta}-\langle\beta, \rho (x')\rangle>0$ if and only if $x'\in \Gamma\setminus \Gamma_0^{\beta}$. The proof of this theorem is complete.
\end{proof}

 The distribution  $\pi_0^{\beta}$ in Theorem \ref{cbal_red} is the conditional probability of $\pi$ on $\Gamma_0^{\beta}$. By replacing $\pi$ with $\pi_N$ (defined in Proposition \ref{thm_sdsc}) for any $N$, we can conclude that $\pi_0^{\beta}$ is the conditional probability of $\pi_N^{\beta}$ on $\Gamma_0$. This follows from  $\pi_N^{\beta} (x) = N^{-\gamma_0^{\beta}} \pi(x)$ for all $x\in\Gamma_0^{\beta}$.
In comparison with \ref{exp_1}, Proposition \ref{thm_sdsc} and Theorem \ref{cbal_red} show that the mass-action assumption is not necessary, and can be replaced by complex balancing.

As a result of Proposition \ref{thm_sdsc}, Theorem \ref{thm_sdpi0}/Proposition \ref{prop_itms}/ Proposition \ref{prop_rvsb} and Theorem \ref{cbal_red}, we have the next proposition.

\begin{theorem}
Let $(\cN,\lambda)$ be a weakly reversible SRN with a set of non-interacting species $\cU$, and let $(\cN, \lambda_N^{\beta})$ be the   SRN with $\lambda_N^{\beta}$,  $N\in \N$; and $\beta \in \R_{> 0 }^{m}$, defined   in \eqref{def_lmdn}. Suppose that $\pi$ is a complex balanced distribution for $(\cN,\lambda)$ on an irreducible component $\Gamma$, and that $\Gamma_0$ defined as in \eqref{def_gmm0-nint} is non-empty. Assume that at least one  of the following conditions hold:
\begin{itemize}
\item Condition \ref{con_2},
\item $\cU$ are intermediates,
\item $\cN$ is reversible,
\end{itemize}
and let   $(\cN_{\cU},\lambda_{\cU})$ be the  reduced SRN of $(\cN,\lambda)$ by elimination of the species in $\cU$.
Then, $\pi_0^{\beta} = \lim_{N\to\infty}\pi_N^{\beta}$, where $\pi_0$ is defined in Proposition \ref{thm_sdsc}, is a stationary distribution (linear combination of stationary distributions on disjoint irreducible components of $\cN_{\cU}$) of $(\cN_{\cU},\lambda_{\cU})$ supported on $\Gamma_0$. 
\end{theorem}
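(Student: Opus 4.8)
The plan is to assemble the statement from results already established, the only genuine verification being that the limiting support $\Gamma_0^\beta$ of Theorem \ref{cbal_red} coincides with the set $\Gamma_0$ of \eqref{def_gmm0-nint}. First I would note that, because $\cU$ is a set of non-interacting species, $\rho(x)\in\N_0^m$ for every $x=(z,u)\in\Gamma$, and since $\beta\in\R_{>0}^m$ we have $\langle\beta,\rho(x)\rangle=\sum_{i=1}^m\beta_i u_i\geq 0$ with equality precisely when $\rho(x)=0$. As $\Gamma_0\neq\emptyset$ by hypothesis, the minimum in \eqref{def_gmm0} is attained and equals $\gamma_0^\beta=0$, so that $\Gamma_0^\beta=\{x\in\Gamma:\langle\beta,\rho(x)\rangle=0\}=\Gamma_0$. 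This identification is the single point tying the scaling results to the reduction results.

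Next I would apply Proposition \ref{thm_sdsc} to see that each $\pi_N^\beta$ is a complex balanced distribution for $(\cN,\lambda_N^\beta)$ on $\Gamma$, so the hypotheses of Theorem \ref{cbal_red} hold. That theorem then yields the pointwise limit $\pi_0^\beta(x)=\lim_{N\to\infty}\pi_N^\beta(x)$, equal to $\pi(x)/\pi(\Gamma_0^\beta)$ for $x\in\Gamma_0^\beta$ and to $0$ for $x\in\Gamma\setminus\Gamma_0^\beta$. Combined with the first step, $\pi_0^\beta$ is exactly the conditional distribution $\pi_0(\cdot)=\pi(\cdot)/\pi(\Gamma_0)$ of $\pi$ restricted to $\Gamma_0$, and in particular it is supported on $\Gamma_0$.

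It then remains to recognize $\pi_0$ as a stationary distribution of $(\cN_\cU,\lambda_\cU)$. Since $\cN$ is weakly reversible, Lemma \ref{lmm_i=j} ensures $\cU$ is eliminable, so $(\cN_\cU,\lambda_\cU)$ is well defined. I would then argue by cases on the assumed hypothesis: under Condition \ref{con_2} the claim is precisely Theorem \ref{thm_sdpi0}ii), which gives that $\pi_0$ is a stationary distribution on $\Gamma_0$ (or a linear combination of stationary distributions on disjoint irreducible components, according to Theorem \ref{thm_sdpi0}i)); if $\cU$ consists of intermediate species, Proposition \ref{prop_itms} gives the stronger conclusion that $\pi_0$ is complex balanced on the single irreducible component $\Gamma_0$, hence stationary; and if $\cN$ is reversible, Proposition \ref{prop_rvsb} gives that $\pi_0$ is detailed balanced, hence again stationary. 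Since $\pi_0=\pi_0^\beta$ by the previous step, the asserted conclusion about $\pi_0^\beta$ follows in every case.

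I do not anticipate a serious obstacle, as the argument is an aggregation of Proposition \ref{thm_sdsc}, Theorem \ref{cbal_red}, and the three reduction results, with the equality $\Gamma_0^\beta=\Gamma_0$ as the load-bearing observation. The one place that merits care is the reversible branch: Proposition \ref{prop_rvsb} is stated for a \emph{detailed} balanced $\pi$, so I would check that in this case the hypotheses are applied consistently (i.e.\ that the distribution at hand is in fact detailed balanced) before concluding, rather than merely complex balanced.
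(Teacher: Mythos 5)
Your proposal matches the paper's own treatment: the paper gives no separate proof of this theorem, stating only that it follows by combining Proposition \ref{thm_sdsc}, Theorem \ref{cbal_red}, and Theorem \ref{thm_sdpi0}/Proposition \ref{prop_itms}/Proposition \ref{prop_rvsb}, which is exactly your aggregation, with your identification $\Gamma_0^\beta=\Gamma_0$ (via $\gamma_0^\beta=0$, using $\beta\in\R_{>0}^m$ and $\Gamma_0\neq\emptyset$) supplying the one step the paper leaves implicit. Your closing caveat about the reversible branch is well taken, and it is in fact a gap in the paper's own statement rather than in your argument: Proposition \ref{prop_rvsb} requires $\pi$ to be \emph{detailed} balanced, and a complex balanced distribution on a reversible network need not be detailed balanced (e.g.\ a reversible cycle whose rate constants violate the Wegscheider conditions), so that branch requires strengthening the hypothesis exactly as you indicate.
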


\section{Proofs}

In this section, we give  proofs of  Lemma \ref{lmm_fgrn}, Theorem  \ref{thm_sdpi0}, Propositions \ref{prop_itms} and \ref{prop_rvsb}. In particular, the proofs of Theorem  \ref{thm_sdpi0} and Proposition \ref{prop_itms} extend  ideas  in \cite[Lemmas 4.2 and 4.3]{HWX21+}.

We introduce some notation. Let $ \cN $ be a (reduced) reaction network.
 A reaction $r=y\ce{->}y$ is   an \emph{incoming} reaction of $y'$ and an \emph{outgoing} reaction of $y$.   For a complex, $y\in \cC$, $\inc (y)$ and $\out (y)$  denote the sets of all incoming and outgoing reactions of $y$, respectively.  In a reduced reaction network, we often add the subscript $\cU$, for example, $\out_\cU$.

For $\gamma\in\Xi_i(q)$, let 
\begin{align*}
\init(\gamma)=U_i\ce{->[$r_0$]}U_{i_1}\quad \mathrm{and}\quad \ter(\gamma) = U_{i_q}\ce{->[$r_q$]}U_{i},
\end{align*}
be the initial and terminal edges of $\gamma$, respectively. Finally, we define 
\begin{align}\label{def_mpro}
m_{\pro} \coloneqq |\cU_{\pro}|,\quad \text{and} \quad \cU_{\pro} = \{U_1,\dots, U_{m_{\pro}}\},
\end{align}
such that $\cU\setminus \cU_{\pro} =\{U_{m_{\pro}+1},\ldots,U_m\}$.

\subsection{Proof of Lemma \ref{lmm_fgrn}}\label{app_fgrn}

($\Rightarrow$) \quad  
 Suppose Condition \ref{con_1} holds. Given a walk $\theta\subseteq (\cV, \cE)$, define $\cU_{\theta} \coloneqq \{U_i\in \cU \colon U_i \in \theta\}$.  Consider a closed walk $\gamma \in \Xi_{i_0}$ for any $i_0 \in \{1,\dots, m_{\pro}\}$. Then, $U_{i_0} \in \cU_{\gamma}$. Additionally, $\cR_{i_0,i_0}=\emptyset$, because otherwise $r = y + U_{i_0} \ce{->} y' + U_{i_0} \in \cR_{i_0,i_0}$ with $y \neq y'$ contradicts   Condition \ref{con_1}. This implies $|\cU_{\gamma}|\geq 2$, and  
\[
\kR(\Xi_{i_0})=\cup_{k=2}^{m_{\pro}}A_{i_0,k}, \quad A_{i_0, k} \coloneqq \{\kR(\gamma)\colon |\cU_{\gamma}|=k\}.
\]
 We will show that $|\kR(\Xi_0)| < \infty$, which is the key to   prove $|\cR_{\cU}| < \infty$.

Suppose $k=2$. Then, $\gamma$ has the form
\[
\gamma = U_{i_0}\ce{->[$r_0$]}U_{j}\ce{->[$r_1$]}U_{i_0}\ce{->[$r_{2}$]}\cdots\ce{->[$r_{2q}$]}U_j\ce{->[$r_{2q+1}$]}U_{i_0},
\]
for some $j \in \{1,\dots, i_0 - 1, i_0 + 1,\dots, m_{\pro}\}$ and $q\in\N_0$. As a result of \cite[Proposition 1]{hoessly2021sum} and Condition \ref{con_1}, 
\begin{align}\label{eq_fgrn1}
\kR(\gamma) = \oplus_{i=0}^q \big(r_{2i}\oplus r_{2i+1}\big) = \oplus_{i=0}^q (y_i+U_{i_0},y_i+U_{i_0}),
\end{align}
for some $y_i \in \N_0^{\cS}$, $i=0,\dots, q$. Note that $\{r\oplus r' \colon r \in \cR_{i_0,j}, r' \in \cR_{j,i_0}\}$ is a finite set. This implies that 
\begin{align*}
  \big\{ y \in \Z_{\geq 0}^{\cS} \colon (y + U_{i_0}, y + U_{i_0}) = r \oplus r',  r \in \cR_{i_0,j}, r' \in \cR_{j,i_0}\big\}
\end{align*}
is a finite set.
  Combining this observation with \cite[Proposition 2(iii)]{hoessly2021sum}, we conclude that $A_{i_0,2}$ is   finite. 

We prove   finiteness of $A_{i_0,k}$ for general $k\geq 3$ by induction. Let $\gamma \in A_{i_0,k}$ be of the form \eqref{def_gm} with $i=i_0$.  It suffices to prove the claim
\begin{equation}\label{eq_claim}\tag{$*$}
   \text{The collection of all possible $\kR(\gamma)$ is finite, assuming $i_0 \notin \{i_1,\dots, i_q \}$}
 \end{equation}  
 Otherwise, we can decompose $\gamma$ into a `summation' of closed walks,
where $U_{i_0}$ only appears as the initial and terminal nodes in each closed walk.  Here, we say a walk $\theta$ is the summation of two walks $\theta_1, \theta_2$, written as $\theta = \theta_1 + \theta_2$ if 
\[
  \theta = U_{i_1}\ce{->[$r_1$]}U_{i_2}\ce{->[$r_2$]}\cdots \ce{->[$r_{q''-1}$]}U_{i_{q''}},
\]
\[
\theta_1 = \{U_{i_1}\ce{->[$r_1$]}\cdots \ce{->[$r_{q'-1}$]} U_{i_{q'}}\}\quad \mathrm{and}\quad \theta_2 =
\{U_{i_{q'}}\ce{->[$r_{q'}$]}\cdots \ce{->[$r_{q''-1}$]}U_{i_{q''}}\}.
\]
Therefore, $\kR(\gamma)$ can be written in the form of \eqref{eq_fgrn1},  
 and finiteness of $A_{i_0,k}$ follows from \cite[Proposition 2(iii)]{hoessly2021sum} and  the finiteness of the collection of all possible $y_i$'s is finite, where the latter is a consequence of claim \eqref{eq_claim}.

Suppose that $\gamma$ of the form \eqref{def_gm} with $i=i_0$ and $i_0 \notin \{i_1,\dots, i_q\}$. Then the collection of all possible $\gamma$'s can be divided into two sets:
\begin{equation*}
  B_1 \coloneqq \{\gamma \colon i_{j}\neq i_{j'}, \text{ for all } 1\leq j < j' \leq q\},
\quad
  B_2 \coloneqq \{\gamma \colon \gamma \text{ is not in } B_1\}.
\end{equation*}
It is trivial that $B_1$ is a finite set. It suffices to show that $B_2$ is finite, too. Let $\gamma \in B_2$.
We   decompose $\gamma$ into the summation of closed walks connected by paths  
\[
\gamma = \theta_{1} + \gamma_{1} + \theta_{2} + \dots + \gamma_{q} + \theta_{q+1},
\]
where for all $i$, $\gamma_{i}\in \Xi_{j_i'}$ and $\theta_{i}$ is a path in the sense that  no repeated nodes are allowed.

Without loss of generality, we assume that $j_i'\neq j_{i'}'$ for all $1\leq i< i'\leq q'$, where $j_i'$ is such that $\gamma_i \in \Xi_{j_i'}$. Thus, $q\leq m_{\pro}$. 
Because otherwise,  let $\gamma'_{i} = \gamma_{i} + \theta_{i+1} + \cdots + \theta_{i'} + \gamma_{i'}\in \Xi_{j_i'}$,  and write 
\[
\gamma = \theta_{1} + \gamma_{1} + \theta_{2} + \dots + \theta_{i} + \gamma_i' + \theta_{i' + 1} + \gamma_{i' + 1} + \dots + \gamma_{q} + \theta_{q+1}.
\]
 This process can be iterated until no repeated $j_i'$'s occur.
Note that the collection of all paths in $(\cV,\cE)$ is finite. This implies the collection of all possible $\kR(\theta_{i})$'s, $1\leq i\leq q'+1$ is finite. 
Additionally, for any $i\in\{1,\dots, q\}$, $U_{i_0}$ is not included in $\gamma_{i}$, and thus  $\kR(\gamma_{i})\in A_{j_i', k'}$ with some $k'<k$. Thanks to the induction hypothesis, $|A_{j_i', k'}| < \infty$,   the collection of all possible $\kR(\gamma_i)$ is finite for all $i = 1,\dots, q$. Hence,  $B_2$ (the collection of all possible $\kR(\gamma) = \kR (\theta_{1})\oplus \kR(\gamma_1) \oplus \cdots \oplus \kR(\theta_{q+1})$) is finite   due to \cite[Proposition 2(iii)]{hoessly2021sum}, and the fact $q\leq m_{\pro}$.  
  This completes the proof of claim \eqref{eq_claim} for $i_0$ and thus for all $i = 1,\dots, m_{\pro}$ and $k = 2, \dots, m_{\pro}$.

The rest of the proof is straightforward. For any $\gamma \in \Xi_0$, we can decompose it to a  
summation of at most $m_{\pro}$ closed walks connected by paths. Since  the range of the function $\kR$ on paths and closed walks are all finite,  the collections of all possible $\kR(\gamma)$  is also finite. As a result, $|\kR(\Xi_0)| < \infty$.

($\Leftarrow$)\quad   Suppose that $\cR_{\cU}$ consists of finitely many reactions but Condition \ref{con_1} fails. Then, there exists a closed walk $\gamma_0 \in \Xi_{i_0}$ for some $i_0\in \{1,\dots, m_{\pro}\}$, such that $\kR(\gamma_0) = y_0 + U_{i_0} \ce{->} y_0' + U_{i_0}$ with $y_0\neq y_0'$. Since $\cU_{\pro}\subseteq\cU_{\deg}$ (the set $\cU$ can be eliminated), then there are  paths $\theta_1$ and $\theta_2$ directed from $U_0$ to $U_{i_0}$ and  from $U_{i_0}$ to $U_{0}$, respectively. It follows that for any $q\in \N$,  
\[
\gamma_{q}\coloneqq \theta_1 + \gamma_{q}^* +\theta_2 \in \Xi_0 \quad \mathrm{where}\quad \gamma_{q}^* \coloneqq \underbrace{\gamma + \dots + \gamma}_{\text{$q$ instances of $\gamma$}}.
\]
Let $(x_q,x_q') \coloneqq \kR(\gamma_{q})$.
We will show that  $\{ (x_q,x_q') \colon q\in \N\}\cap \cR_\cU$
is an infinite set.  Recall that $y_0\neq y_0'$.
 Without loss of generality, assume that $y_{0,1} \neq y_{0,1}'$. Here, we only prove the case $y_{0,1}>y_{0,1}'$, while the other case $y_{0,1} < y_{0,1}'$ can be showed similarly. Let
\[
(z_q+U_{i_0},z_q'+U_{i_0}) \coloneqq \kR(\gamma_{q}^*)=\oplus_{i=1}^q(y+U_{i_0},y'+U_{i_0}).
\]
Then, by Definition \ref{def_sum} and  since $y_{0,1}>y_{0,1}'$, we have 
\[
(z_{q,1},z_{q,1}')=(y_{0,1}+q(y_{0,1} - y_{0,1}'),y_{0,1}').
\]
This yields that $\{z_{q,1}\colon q \in \N\}$ is a strictly increasing sequence. Let $\kR(\theta_1)=(y_1,y_1'+U_{1})$ and $\kR(\theta_2)=(y_2+U_{1},y_2')$. Then, for all $q > M$ for $M$ large enough, it holds that $z_{q,1}\geq y_{1,1}'$. As a result,
\begin{align*}
x_{q,1} &=y_{1,1}+z_{q,1}-y_{0,1}'+(y_{2,1}-z_{q,1}')\vee 0\\
 &=y_{1,1}+y_{0,1}+q(y_{0,1}-y_{0,1}')-y_{1,1}'+(y_{2,1}-y_{0,1}')\vee 0.
\end{align*}
Thus, $\{x_{q,1}, q > M\}$ is a strictly increasing sequence. This  contradicts that $\{\kR(\gamma_{q}) = (x_{q},x_{q}') \colon q>M\} \subseteq \cR_\cU$ is a {finite} set. The proof is complete.

\subsection{Proof of Theorem \ref{thm_sdpi0}}\label{prf_sdpi0}

\begin{proof}[Proof of Theorem \ref{thm_sdpi0}i)]
It suffice to show that for any $x\in \Gamma_0$ and $x'\in\N_0^{n_{\cU}}$, $x\to_{\cN_{\cU}} x'$ implies 1) $x'\in\Gamma_0$, and 2) $x'\to_{\cN_{\cU}} x$. 

By definition, $x\to_{\cN_{\cU}} x'$ if and only if there exist reactions $r_1,\dots, r_k \in \cR_{\cU}$, such that $r_1\oplus \cdots \oplus r_k = (y,y')$ with $x\geq y$ and $x' - x = y' - y$. Using an iteration argument, we can assume that $k = 1$. By definition of reduction of reaction networks,  either $r_1 \in \cR$ or there exists a $\gamma \in \Xi_0$ such that $r_1 = \kR (\gamma)$. In either case, we have $x\to_{\cN} x'$ with $\rho(x') = 0$ and thus $x' \in \Gamma_0$.

By the weak reversibility of $\cN$, if $r_1 = y_1\ce{->}y' \in \cR$, then there exists a sequence of reactions such that $y'\ce{->}y_1\ce{->}y_2\ce{->}\cdots \ce{->}y_{k'}\ce{->} y \subseteq \cR$.   We further assume that $y_i \neq y_j$ for all $1 \leq i < j \leq k'$. Otherwise if $y_i = y_j$, one can work on the next sequence $y'\ce{->}y_1\ce{->} \cdots \ce{->}y_{i} \ce{->}y_{j + 1} \ce{->} \cdots \ce{->} y$, which is also included in $\cR$. If $y_i \in \cC_{0}$ for all $i=1,\dots, k'$, then each reaction is also an element of $\cR_{\cU}$. This proves $x'\to_{\cN_{\cU}} x$. Otherwise, there are $1\leq i_1<i_2<\dots< i_{k''}\leq k'$ for some $k''<k'$, such that $y_{i}\in \cC_0$ if and only if $i \in \{i_1,\dots, i_{k''}\}$. For each $q \in \{0,\dots, k''\}$, let $\gamma_q$ be the closed walk in $(\cV,\cE)$ given by the reactions 
$y_{i_q} \ce{->} y_{i_q+1}   \ce{->}\cdots \ce{->} y_{i_{q+1}}$, where by convention $y_{i_0}\coloneqq y'$ and $y_{i_{k'+1}} \coloneqq y$.
Then, $\gamma_q \in \Xi_0$ for all $q$. By removing all $\gamma_q$, such that $\kR(\gamma_q) = (y,y)$ for some $y\in\N_0^n$. 
Recall the assumption that that $y_i \neq y_j$ for all $1 \leq i < j \leq k'$, we know that $r_{(q)} \coloneqq  \kR(\gamma_q) = y_{i_q} \ce{->} y_{i_{q + 1}} \in \cR_{\cU}$.
Therefore, $\oplus_{q=0}^{k''} r_{(q)} = (y',y)$. This proves $x'\to_{\cN_{\cU}} x$. The proof of Theorem \ref{thm_sdpi0}i) is complete.
\end{proof}

\begin{proof}[Proof of Theorem \ref{thm_sdpi0}ii)]   For any $\eta \in \cC_0$, let 
\[
L_0 (\eta) = \sum_{r \in \out_{\cU}^* (\eta)} \pi(x) \lambda_{\cU,r}(x) = \sum_{r\in \out (\eta)\cap \cR_{0,0}} \pi(x)\lambda_r(x)+\sum_{\gamma\in \Xi_0\colon \init(\gamma)\in \out(\eta)} \pi(x)\lambda^*_{\gamma}(x),
\]
\[
R_0 (\eta) = \sum_{r\in \inc (\eta)\cap \cR_{0,0}} \pi(x-\zeta_r)\lambda_r(x-\zeta_r)+\sum_{\gamma\in \Xi_0 \colon \ter(\gamma)\in \inc(\eta)} \pi(x-\zeta_{\kR(\gamma)})\lambda^*_{\gamma}(x-\zeta_{\kR(\gamma)}),
\]
where $x\in \Gamma_0$ satisfies $x\geq \eta$. We use $L (\eta)$ and $R (\eta)$ for the left- and right-hand sides, respectively, of the following complex balanced equation,
\begin{align*}
\pi(x)\sum_{r\in \out (\eta)} \lambda_r(x)=\sum_{r\in \inc (\eta)} \pi(x-\zeta_r)\lambda_r(x-\zeta_r).
\end{align*}
Then, we   prove $L_0 (\eta) = R_0 (\eta)$ by verifying $L (\eta) = L_0(\eta)$ and $R (\eta)=R_0 (\eta)$. Once this has been done, because $L_0(\eta) = R_0(\eta) = 0$ if $x\not\geq \eta$, we have
\[
\pi(x)\sum_{r\in \kR(\Xi_0)} \lambda_{\cU,r}(x) = \sum_{\eta \in \cC_0} L_0(\eta) = \sum_{\eta \in \cC_0} R_0(\eta) =\sum_{r\in \kR(\Xi_0)} \pi(x-\zeta_r)\lambda_{\cU,r}(x-\zeta_r).
\]
This implies that
\[
\pi(x)\sum_{r\in \cR_{\cU}} \lambda_{\cU,r}(x)=\sum_{r\in \cR_{\cU}} \pi(x-\zeta_r)\lambda_{\cU,r} (x-\zeta_r),
\]
and thus $\pi$ is a stationary distribution for $(\cN_{\cU},\lambda_{\cU})$. 

To achieve this goal, we introduce two DTMCs, which are  used in the proofs of $L (\eta) = L_0(\eta)$ and $R (\eta)=R_0 (\eta)$. Denote by $\Theta_0$ the set of all open (not closed) walks in $(\cV,\cE)$ of the form
\[
\theta=\{U_0\ce{->[$r_0$]}U_{i_1}\ce{->[$r_1$]}\cdots \ce{->[$r_{q-1}$]}U_{i_q}\},\quad q\in \N_0,\quad \mathrm{and}\quad i_1,\dots, i_q \in \{1,\dots, m_{\pro}\}.
\]
Define $\cX \coloneqq \cX_0\cup \{\partial\}$, where $\partial$ is a ceremony  state, and
\begin{align}\label{def_cx}
\cX_0 \coloneqq \{x+\zeta_{\kR(\theta)}\colon \theta \in \Theta_0,x\geq \reac(\kR(\theta))\}.
\end{align}
Analogously,   define $\cY \coloneqq \cY_0 \cup \{\partial\}$, where  
\begin{align}\label{def_cy}
\cY_0 \coloneqq \{y\in \N_0^n \colon y+\zeta_{\kR(\theta)} = x, \theta \in \Theta_0', y\geq \reac(\kR(\theta))\},
\end{align}
with $\Theta_0'$ consisting of all open walks in $(\cV,\cE)$ of the form
\[
\theta=\{U_{i_1}\ce{->[$r_1$]}\cdots \ce{->[$r_{q-1}$]}U_{i_q}\ce{->[$r_q$]} U_0\},\quad q\in \N_0,\quad \mathrm{and}\quad i_1,\dots, i_q \in  \{1,\dots, m_{\pro}\}.
\]

\begin{lemma}\label{lmm_fnit2}
i) $\cX_0$ in \eqref{def_cx} is finite for any $x\in \N_0^n$, if and only if Condition \ref{con_2}i) holds.

ii) $\cY_0$ in \eqref{def_cy} is finite for any $x\in \N_0^n$, if and only if Condition \ref{con_2}ii) holds.

\end{lemma}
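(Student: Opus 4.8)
The plan is to reinterpret both $\cX_0$ and $\cY_0$ as reachability sets of a Petri net / vector addition system and to characterize their finiteness by the absence of a strictly positive enabled cycle, which is exactly what Condition \ref{con_2} encodes. First I would record two structural facts. The net reaction vector is additive under $\oplus$, so $\zeta_{\kR(\theta)}=\sum_k\zeta_{r_k}$ for a walk $\theta$ with edges $r_1,\dots,r_q$; thus the displacement depends only on the multiset of edges, not their order, even though $\oplus$ is non-commutative on reactants/products. Second, for a closed walk $\gamma\in\Xi_i$ with $\kR(\gamma)=(y+U_i,y'+U_i)$, iteration gives the pumping identity $\reac(\kR(\gamma^n))=\reac(\kR(\gamma))+(n-1)(0\vee(y-y'))$ and $\prdt(\kR(\gamma^n))=\prdt(\kR(\gamma))+(n-1)(0\vee(y'-y))$. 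I then view $\cX_0$ as the set of states reachable from $x$ by firing in order the reactions of a walk starting at $U_0$, where the ``control state'' is the intermediate currently carried and the ``data'' is the vector of core species; the guard $x\ge\reac(\kR(\theta))$ says exactly that every reaction along the way is enabled. Using weak reversibility through Lemma \ref{lmm_i=j} (so $\cU_{\pro}=\cU_{\deg}$), the nodes visited by walks in $\Theta_0$ are the produced intermediates, which are precisely those reachable from and returning to $U_0$; this lets me restrict attention to cycles at produced species.

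For the ``only if'' direction of i) I argue by contraposition. If Condition \ref{con_2}i) fails, some $\gamma\in\Xi_i$ has $\kR(\gamma)=(y+U_i,y'+U_i)$ with $y<y'$, hence $0\vee(y-y')=0$ and $\zeta_{\kR(\gamma)}=y'-y>0$. Since $U_i$ is produced, weak reversibility supplies a walk $\theta_1\in\Theta_0$ from $U_0$ to $U_i$, so $\theta_1+\gamma^n\in\Theta_0$ for every $n$. By the pumping identity $\reac(\kR(\theta_1+\gamma^n))$ stays bounded in $n$ (the reactant of $\gamma^n$ does not grow), so for a suitably large $x$ all these walks are valid, while $\zeta_{\kR(\theta_1+\gamma^n)}=\zeta_{\kR(\theta_1)}+n(y'-y)$ is strictly increasing. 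Hence $\cX_0$ is infinite for that $x$, contradicting finiteness for all $x$.

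For the ``if'' direction I would show Condition \ref{con_2}i) forces $\cX_0$ finite for every $x$. The key equivalence is that Condition \ref{con_2}i) is exactly the nonexistence of a strictly positive enabled cycle: a valid firing segment returning to the same control state with strictly larger data is a closed walk at some produced $U_j$ avoiding $U_0$, i.e.\ an element of $\Xi_j$, whose positive net effect would read $y<y'$; conversely Condition \ref{con_2}i) forbids even \emph{composite} such cycles, since any composite closed walk at $U_j$ again lies in $\Xi_j$. Finiteness then follows from the boundedness theorem for vector addition systems, which I would either cite or reprove by running the Karp--Miller coverability construction on the tree of valid firing sequences from $x$: by Dickson's lemma every infinite branch meets a dominating pair of equal control state, and the hypothesis forces every such pair to be an \emph{equality} (a genuine repetition of the configuration), so the coverability tree is finite and the reachable set is finite. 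This last step is the main obstacle. Forbidding strictly positive cycles does not by itself bound the displacements, because a sum of individually non-positive cycle effects can be positive; one therefore cannot merely decompose a walk into simple cycles and bound multiplicities, and the boundedness must be extracted from the reactant guard via the well-quasi-ordering of $\N_0^n$. The reason Condition \ref{con_2}i) is strong enough is precisely that it forbids \emph{all} closed walks of positive effect in $\Xi_j$, including the composite ones obtained by combining several simple cycles along a connecting path.

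Finally, part ii) I would obtain from the time-reversal symmetry. Writing $\cY_0=\{x-\zeta_{\kR(\theta)}:\theta\in\Theta_0',\ x\ge\prdt(\kR(\theta))\}$, this is the forward reachability set of the reversed reaction network (each $y\to y'$ replaced by $y'\to y$), in which degradation walks of $\cN$ become production walks and a cycle $\gamma\in\Xi_i$ of effect $y'-y$ becomes one of effect $y-y'$. Reversal preserves weak reversibility and swaps $\cU_{\pro}$ with $\cU_{\deg}$, which coincide by Lemma \ref{lmm_i=j}, so applying part i) to the reversed network shows $\cY_0$ is finite if and only if the reversed network has no positive cycle, which is exactly Condition \ref{con_2}ii) ($y\not>y'$) for $\cN$. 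This reuses all of part i) and reduces both statements to a single boundedness principle.
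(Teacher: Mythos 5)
Your proof is correct, and for the hard direction it takes a genuinely different route from the paper's. The pumping argument for ``Condition \ref{con_2}i) fails $\Rightarrow$ $\cX_0$ infinite'' is essentially identical to the paper's Statement A (both arguments, incidentally, implicitly use that the witnessing $U_i$ is produced, so that a walk from $U_0$ can be prepended --- the same unstated step appears in the paper). For the converse, however, the paper does not pass through vector-addition-system boundedness: it decomposes closed walks into sub-walks in which the base intermediate appears only at the endpoints, runs an induction on the number $|\cC_\gamma|$ of distinct intermediates visited, and derives a contradiction by combining Lemma \ref{lmm_posiseq} (Dickson's lemma in disguise) with a separate combinatorial Lemma \ref{lmm_tc1} on nonnegative integer combinations of vectors. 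Your Karp--Miller argument replaces this entire induction by the truncated reachability tree, K\"onig's lemma, Dickson's lemma, and the observation --- which you correctly isolate as the crux --- that Condition \ref{con_2}i) forbids even \emph{composite} closed walks of positive effect, since any closed walk at $U_j$ avoiding $U_0$ again lies in $\Xi_j$; hence every dominating pair along a branch is a genuine repetition and the tree is finite. This is cleaner and more modular than the paper's proof, and it makes transparent why simple-cycle decompositions would not suffice; its only cost is the standard loop-removal fact that every element of $\cX_0$ is reached by a repetition-free firing sequence (so the truncated tree covers $\cX_0$), which you should state explicitly when writing it up. For part ii) you in fact do more than the paper: the paper simply declares that proving i) suffices, whereas your time-reversal reduction --- using that $\oplus$ anti-commutes with reversal of reactions and that reversal swaps $\cU_{\pro}$ and $\cU_{\deg}$, which coincide by Lemma \ref{lmm_i=j} --- makes that symmetry precise and verifiable.
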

The proof of Lemma \ref{lmm_fnit2} is in Section \ref{app_fnit2}.

\textbf{Step 1:}  $L(\eta) = L_0(\eta)$.
Similarly to Step 1 in Section \ref{prf_itms}, to prove $L(\eta) = L_0(\eta)$, it suffices to show that for every reaction $r_* = \eta\ce{->}y'+U_{\iota} \in \cR_{0,\iota}$,
\begin{align}\label{equ_sdl}
\sum_{k=1}^{\infty}\sum_{\gamma\in \Xi_0(k) \colon \init(\gamma) = r_*} \lambda^*_{\gamma}(x)= \lambda_{r_*}(x).
\end{align}

To prove \eqref{equ_sdl}, we introduce an auxiliary  discrete-time Markov chain (DTMC) $X$, and relate \eqref{equ_sdl} to the recurrence of $X$. For a clearer understanding, we refer readers to Section \ref{prf_itms}, where a simpler instance is detailed to illustrate how the recurrence of $X$ leads to the validity of \eqref{equ_sdl}. Due to the non-interacting structure,   for any $z\in \cX_0$, we have $\rho(z) = U_i$ for some $U_i\in \cU_{\pro}$. This allows us to define   the  transition probabilities  of the Markov chain $X$  as follows,
\begin{align*}
\mathbb{P}_{z} (z') = P_{z,z'} \coloneqq \begin{cases}
\displaystyle  \sum_{r\in \cR_{0,0}}\beta_{r}(x),  & z = z' = \partial,\\[12pt]
\displaystyle  \sum_{r \in \cR_{i,0}} \beta_{r}(z), & \rho (z) = U_i, z' = \partial,\\[12pt]
\displaystyle  \sum_{r \in \cR_{0,j}\colon \zeta_r = z' - x }\beta_{r}(x),  & z = \partial, \rho (z') = U_j\\[12pt]
\displaystyle \sum_{r\in \cR_{i,j}\colon  \zeta_r = z' - z} \beta_{r} (z), & \rho (z) = U_i, \rho (z') = U_j,
\end{cases}
\end{align*}
where $i,j\in \{1,\dots,m_{\pro}\}$. Moreover, \eqref{equ_sdl} is valid if and only if
\begin{align}\label{equ_sd2}
\mathbb{P}_{x+\zeta_{r_0}}(\tau<\infty)=1,\quad \mathrm{where}\quad \tau = \min \{k\geq 1\colon X_k = \partial\}.
\end{align}
By weak reversibility of $\cN$,   $X$ is irreducible, and thus \eqref{equ_sd2} is true because $\cX_0$ is a finite set, see Lemma \ref{lmm_fnit2}i).

\textbf{Step 2:}  $R(\eta) = R_0(\eta)$. Using the same idea as in Step 2 in Section \ref{prf_itms}, we can reduce this task to showing that for every reaction $r^* = y' + U_{\iota} \ce{->} \eta \in \cR_{\iota,0}$, it holds that
\begin{align}\label{equ_sdr}
\sum_{\gamma\in \Xi_0,\kR(\ter(\gamma))=r^*} \pi (x - \zeta_{\kR(\gamma)}) \lambda^*_{\gamma}(x - \zeta_{\kR(\gamma)}) = \pi (x - \zeta_{r^*}) \lambda_{r^*}(x - \zeta_{r^*})
\end{align}
for all $x\in \Gamma_0$ such that $x - \zeta_{r^*} \geq \reac (r^*)$. 
 
Let $Y$ be a DTMC on $\cY$ with transition probabilities
\[
P_{z,z'}=\begin{cases}
\displaystyle \sum_{r\in \cR_{0,0}}\frac{  \pi(x - \zeta_r) \lambda_{y\to z}(x - \zeta_r) }{\pi(x)\sum_{j = 0}^m \sum_{r'\in \cR_{0,j}} \lambda_{r'}(x) }, & z = z' = \partial,\\[12pt]
\displaystyle \sum_{r\in \cR_{i,0}\colon  \zeta_r = z' - x}\frac{\pi(z') \lambda_{r}(z') }{\pi(x)\sum_{j = 0}^m \sum_{r'\in \cR_{0,j}} \lambda_{r'}(x)  }, & z = \partial, \rho (z') = U_i, \\[12pt]
\displaystyle  \sum_{r\in \cR_{0, j}}\frac{\pi(z - \zeta_r) \lambda_{r}(z - \zeta_r ) }{\pi (z)\sum_{j'=0}^{m}\sum_{r'\in \cR_{j,j'}} \lambda_{r'}(z)  }, & \rho (z) = U_j , z' = \partial,\\[12pt]
\displaystyle  \sum_{r\in \cR_{i, j}\colon  \zeta_r = z' - z}\frac{\pi(z') \lambda_{r}(z' ) }{\pi (z)\sum_{j'=0}^{m}\sum_{r'\in \cR_{j,j'}} \lambda_{r'}(z)  }, & \rho (z) = U_j , z' = U_i,
\end{cases}
\]
where $i,j\in \{1,\dots, m_{\pro}\}$. Following the same line of thought as in Section \ref{prf_itms} step 2, we can show that \eqref{equ_sdr} is equivalent to $\mathbb{P}_{y' + U_i} (\tau < \infty) = 1$, where $\tau \coloneqq \min \{k \geq 1 \colon Y_k = \partial\}$. This is due to the recurrence of an irreducible DTMC in a finite state space. The proof of Theorem \ref{thm_sdpi0}ii) is complete.
\end{proof}

\subsection{Proof of Proposition \ref{prop_itms}}\label{prf_itms}

If $\cU$ consists of intermediate species, then   $\Gamma_0$ is an irreducible component of $\cN_{\cU}$  \cite[Theorem 5.6]{hoessly2021sum}. In addition, if $\gamma\in \Xi_0$, then $\kR(\gamma)=y\ce{->}y'$, where $y=\reac (r_{0})$ and $y'=\prdt (r_{q})$. Thus, $\cC_{\cU}\subseteq \cC_0=\{y\in \cC\colon \rho  (y) = 0\}$. To prove the proposition, it suffices to show the following equation for all $\eta\in \cC_{\cU}$ and $x\in\Gamma_0$,
\begin{align}\label{equ_cbmit0}
\sum_{r\in \out_\cU(\eta)} \pi(x)\lambda_{\cU,r}(x)=\sum_{r\in \inc_\cU(\eta)} \pi(x-\zeta_r)\lambda_{\cU,r}(x-\zeta_r),
\end{align}
where $\out_\cU (\eta)$ and $\inc_\cU(\eta)$ denotes the set of outgoing and incoming reactions of $\eta$ in $\cR_{\cU}$. 
If $r\in \kR(\Xi_0)\setminus\cR_\cU$, 
 then $r=(\eta,\eta)$ for some $\eta$, and $r$ is both an outgoing `reaction' of $\eta$, and an incoming `reaction' of $\eta$, with $\zeta_r=\eta-\eta=0$. It follows that \eqref{equ_cbmit0} is equivalent to 
\begin{align}\label{equ_cbmit}
\sum_{r\in \out_\cU^* (\eta)} \pi(x)\lambda_{\cU,r}(x)=\sum_{r\in \inc_\cU^* (\eta)} \pi(x-\zeta_r)\lambda_{\cU,r}(x-\zeta_r),
\end{align}
where $\out_\cU^* (\eta)$ and $\inc_\cU^*(\eta)$ denote the set of outgoing and incoming `reactions' of $\eta$ in $\kR(\Xi_0)$ 
and $\lambda_{\cU}$ is extended to $\kR(\Xi_0)$   
by \eqref{def_lmd'}. 

Recall that $\pi$ is a complex balanced distribution for $(\cN,\lambda)$ and $\eta$ as in \eqref{equ_cbmit} is in $\cC_{\cU}\subseteq \cC_0\subseteq \cC$. It follows that
\begin{align*}
\sum_{r\in \out (\eta)} \pi(x)\lambda_r(x)=\sum_{r\in \inc (\eta)} \pi(x-\zeta_r)\lambda_r(x-\zeta_r)
\end{align*}
where $\out (\eta)$ and $\inc (\eta)$ denotes the set of outgoing and incoming reactions of $\eta$ in $\cR$.

Denote by $L_0 (\eta)$ and $R_0 (\eta)$ the left-hand side and right-hand side of \eqref{equ_cbmit}; and by $L (\eta)$ and $R (\eta)$ the left-hand side and right-hand side of \eqref{equ_cbmit0}, respectively. If  $x\not\geq \eta$, then $L_0(\eta)=R_0(\eta)=0$. Thus, it suffices to show \eqref{equ_cbmit} assuming $x\geq \eta$. 

\textbf{Step 1:}  $L(\eta)=L_0(\eta)$. We have
\[
L(\eta)=\pi(x)\Big(\sum_{r\in \out(\eta)\cap \cR_{0,0}}\lambda_r(x)+\sum_{i = 1}^{m_{\pro}} \sum_{r\in \out(\eta)\cap \cR_{0,i}}\lambda_r(x)\Big)
\]
and
\[
L_0(\eta) = \pi(x)\sum_{\gamma\in \Xi_0 \colon g(\gamma)=\eta}\lambda^*_{\gamma}(x) = \pi(x) \Big(\sum_{r\in \out(\eta)\cap \cR_{0,0}}\lambda_r(x) + \sum_{k = 1}^{\infty} \sum_{\gamma\in \Xi_0 (k) \colon g(\gamma)=\eta}\lambda^*_{\gamma}(x)\Big),
\]
where $m_{\pro}$ is given in \eqref{def_mpro}, and $g(\gamma) \coloneqq \reac(\init(\gamma))$ is the reactant of the   reaction of the initial edge of $\gamma$. We claim that for any reaction $r_* = \eta\ce{->[$r_*$]}U_\iota$ with $U_\iota\in \cU_{\pro}$, it holds that
\begin{align}\label{equ_xi0}
\lambda_{r_*} (x) = \sum_{k = 1}^{\infty} \sum_{\gamma\in \Xi_0(k)\colon \init(\gamma)=U_0\overset{r_*}{\longrightarrow} U_{\iota}}\lambda^*_{\gamma}(x).
\end{align}
Then, by summing both sides of \eqref{equ_xi0} among $r_* \in \out (\eta)\setminus \cR_{0,0}$, we get
\begin{align*}
\sum_{r_*\in \out(\eta)\setminus \cR_{0,0}}\lambda_{r_*}(x)=\sum_{\gamma\in \Xi_0\colon g(\gamma)=\eta}\lambda^*_{\gamma}(x),
\end{align*}
and thus $L(\eta)=L_0(\eta)$. 

To prove \eqref{equ_xi0}, we construct a DTMC $X$ on $\cX=\{U_0\} \cup \cU_{\pro}$. Due to \eqref{equ_sbt=1},  the transition probabilities  can be defined by 
\[
\mathbb{P}_{U_i} (U_j) = P_{i,j} =
 \displaystyle  \sum_{r \in \cR_{i,j}} \beta_{r} \big(x - \eta + \reac (r)\big).
\]
Thus, for any $i,j\in\{0,\dots, m_{\pro}\}$, $P_{i,j}>0$ if and only if $U_i \ce{->} U_j \in \cR$.
Additionally, since $\cU$ is eliminable, it follows that $X$ is irreducible. Denote by $\tau=\min\{k\geq 1\colon X_k=U_0\}$. Note that
\begin{align*}
  \mathbb{P}_{U_{\iota}} (\tau = 1) = P_{\iota, 0}   &= \sum_{r \in \cR_{\iota, 0}} \beta_r (x - \eta + U_{\iota}) =  \lambda_{r_*}(x)^{-1} \sum_{r \in \cR_{\iota, 0}}  \lambda_{r_*}(x) \beta_r (x - \eta + U_{\iota})\\
    &= \lambda_{r_*}(x)^{-1} \sum_{\gamma \in \Xi_0(1)\colon \init (\gamma) = U_0\overset{r_*}{\longrightarrow} U_{\iota} } \lambda_{\gamma}^*(x).
\end{align*}
By iteration, one can show that for every $k \in \N$,
\begin{align*}
 \mathbb{P}_{U_{\iota}} (\tau = k) = \lambda_{r_*}(x)^{-1} \sum_{\gamma \in \Xi_0(k)\colon \init (\gamma) = U_0\overset{r_*}{\longrightarrow} U_{\iota} } \lambda_{\gamma}^*(x).
\end{align*}
 Taking \eqref{def_lmd0} into account and using that the Markov chain is  irreducible   (cf. \cite[Theorems 1.5.6 and 1.5.7]{norris}), we have
\begin{align*}
1 = \mathbb{P}_{U_{\iota}}(\tau<\infty) = \sum_{k=1}^{\infty}\mathbb{P}_{U_{\iota}}(\tau = k) = \lambda_{r_*} (x)^{-1} \sum_{k=1}^{\infty} \sum_{\gamma\in \Xi_0 (k)\colon \init(\gamma) = U_0\overset{r_*}{\longrightarrow} U_{\iota}}\lambda_{\gamma}^*(x).
\end{align*}
This proves \eqref{equ_xi0} and thus $L(\eta)=L_0(\eta)$.

\textbf{Step 2:} $R(\eta)=R_0(\eta)$.
Similarly, in order to show that $R(\eta)=R_0(\eta)$, it suffices to verify that for any $r^{*} = U_{\iota} \ce{->[$r^*$]} \eta$ with some $U_{\iota} \in \cU_{\deg} = \cU_{\pro}$ (see Lemma \ref{lmm_i=j}), the next equation holds,
\begin{align}\label{equ_xi01}
\pi(x-\eta+U_{\iota})\lambda_{r^*} (x-\eta+U_{\iota}) = \sum_{k=1}^{\infty} \sum_{\gamma\in \Xi_0(k) \colon \ter(\gamma) = U_{\iota} \overset{r^*}{\longrightarrow} U_0}\pi(x-\zeta_{\kR(\gamma)})\lambda_{\gamma}^*(x-\zeta_{\kR(\gamma)}).
\end{align}

For any $i,j \in \{0,\dots, m_{\pro}\}$, let
\[
P_{i,j}=\begin{cases}
\displaystyle \frac{\sum_{y \colon y \to \eta \in \cR_{j,0}}\pi(x - \eta + y) \lambda_{y \to \eta}(x - \eta + y) }{\pi (x) \sum_{y' \colon \eta \to y'\in \cR } \lambda_{\eta \to y'}(x) }, & i = 0,\\[12pt]
\displaystyle \frac{ \sum_{r\in \cR_{j,i}} \pi(x - \eta + \reac(r)) \lambda_{r}(x - \eta + \reac(r)) }{\pi (x - \eta + U_i)\sum_{y'\colon U_i \to y' \in \cR} \lambda_{U_i \to y' }(x - \eta + U_i)  }, & i \neq 0. \\[12pt]
\end{cases}
\]
Recall that $\pi$ is a complex balanced distribution for $(\cN,\lambda)$ on $\cN$, then due to \eqref{def_cbd} and Condition \ref{con_cmp}, we see that $\mathbb{P}_{U_i} (U_j) = P_{i,j}$, $i,j \in \{0,\dots, m_{\pro}\}$ forms a transition probability for a DTMC, say $Y$, on $\cX$.
Let $\tau \coloneqq \min\{q\geq 1\colon Y_q = U_0\}$, and for any $k\in \N$, let
\begin{align*}
R_{1, k}(\eta)\coloneqq \sum_{\gamma\in \Xi_0(k) \colon \ter(\gamma)= U_{\iota} \overset{r^*}{\longrightarrow} U_0}\pi(x-\zeta_{\kR(\gamma)})\lambda_{\gamma}^*(x-\zeta_{\kR(\gamma)}).
\end{align*}
Then, we aim to show that
\begin{align}\label{eq_rtint}
  \mathbb{P}_{U_{\iota}} (\tau = k) = R_{1,k}(\eta) / L_1(\eta),
\end{align}
for all $k \geq 1$, where $L_1(\eta)$ is the left-hand side of equation \eqref{equ_xi01}.
Suppose $k = 1$. Then,
\begin{align}\label{eq_tauy=1}
 \mathbb{P}_{U_{\iota}} (\tau = 1) =  P_{\iota,0} = & \frac{ \sum_{r\in \cR_{0,\iota}} \pi(x - \eta + \reac(r)) \lambda_{r}(x- \eta + \reac(r)) }{\pi (x - \eta + U_{\iota}) \sum_{y'\colon U_i \to y' \in \cR} \lambda_{U_i \to y' }(x - \eta + U_i) }.
\end{align}

On the other hand, for any  $r' = y\ce{->} y'\in \cR_{i,j}$ with some $i,j\in \{0,\dots, m_{\pro}\}$ and $x'\geq y$, it follows from \eqref{def_btijk} and \eqref{equ_sbt=1},
\begin{align*}
\lambda_{r'}(x') = \lambda_{r'}(x')\sum_{j'=0}^{m}\sum_{r''\in \cR_{i,j'}}\beta_{r''}(x') = \beta_{r'}(x') \sum_{j'=0}^{m}\sum_{r'' \in \cR_{i,j'}}\lambda_{r''}(x') > 0,
\end{align*}
and therefore,
\begin{align}\label{equ_btlmd}
\beta_{r'}(x')  = \lambda_{r'}(x') \bigg/  \sum_{j'=0}^{m}\sum_{r'' \in \cR_{i,j'}}\lambda_{r''}(x').
\end{align}
As a consequence, we have
\begin{align}\label{eq_r11}
R_{1,1}(\eta) = & \sum_{y \colon y\to U_{\iota} \in \cR_{0,\iota}} \pi(x - \eta + y) \lambda_{y \to U_{\iota}} (x-\eta + y) \beta_{r^*} (x-\eta + y - y + U_{\iota}) \nonumber\\
= &   \frac{ \lambda_{r^*} (x-\eta + U_{\iota})\sum_{y \colon y\to U_{\iota} \in \cR_{0,\iota}} \pi(x-\eta + y) \lambda_{y \to U_{\iota}} (x-\eta + y) }{ \sum_{y' \colon U_{\iota} \to y' \in \cR}\lambda_{U_{\iota} \to y'}(x-\eta + U_{\iota})}.
\end{align}
By comparing \eqref{equ_xi01}, \eqref{eq_tauy=1} and \eqref{eq_r11}, one obtain equation \eqref{eq_rtint} with $k = 1$.

Similarly, assume $k = 2$. Then, we can write
\begin{align*}
\mathbb{P}_{U_{\iota}} (\tau = 2) =  \sum_{i=1}^{m_{\pro}} \sum_{y  \colon y \to U_{i}\in \cR_{0,i} } \bigg[ &\frac{\pi(x+\eta-y) \lambda_{y\to U_{i}}(x - \eta + y) }{\sum_{y' \colon U_i \to y'\in\cR} \lambda_{U_i \to y'}(x)  }\\
&\times \frac{ \lambda_{U_{i} \to U_{\iota}}(x- \eta + U_i) }{\pi(x-\eta+U_{\iota})\sum_{y' \colon U_{\iota} \to y' \in \cR}\lambda_{U_{\iota} \to y'}(x-\eta + U_{\iota})}\bigg],
\end{align*}
and using \eqref{equ_btlmd},
\begin{align*}
R_{1,2}(\eta) = & \sum_{i=1}^{m_{\pro}}  \sum_{y  \colon y \to U_{i}\in \cR_{0,i} } \pi(x-\eta + y) \lambda_{y \to U_{i}} (x-\eta + y) \beta_{U_i\to U_{\iota}} (x-\eta + U_i) \beta_{r^*} (x-\eta + U_{\iota}) \\
= & \bigg[\sum_{i=1}^{m_{\pro}} \sum_{y  \colon y \to U_{i}\in \cR_{0,i} } \frac{  \lambda_{y \to U_{i}} (x-\eta + y) \lambda_{U_i\to U_{\iota}} (x-\eta + U_i) }{  \sum_{y' \colon U_{i} \to y' \in \cR}\lambda_{U_i \to y'}(x-\eta + U_i)}\bigg]\\
&\times \frac{ \lambda_{U_{\iota}\to \eta}(x-\eta + U_{\iota})  }{ \sum_{y' \colon U_{\iota} \to y' \in \cR}\lambda_{U_{\iota} \to y'}(x-\eta + U_{\iota}) }.
\end{align*}
This implies that equation \eqref{eq_rtint} holds with $k = 2$ as well.
By using an iteration argument, one should be convinced that \eqref{eq_rtint} is true for all $k \in \N$. This leads to \eqref{equ_xi01} and hence $R_0(\eta) = R(\eta)$. The proof of this proposition is complete.

\subsection{Proof of Proposition \ref{prop_rvsb}}\label{prf_rvsb}
The   reduced reaction network $\cN_{\cU}$ is reversible  \cite[Theorem 5.1]{hoessly2021sum}. For   $\gamma\in \Xi_0$, by reversibility, there exists the walk in the opposite direction
\[ 
\gamma'=U_0\ce{->[$r_q'$]}U_{i_q}\ce{->}\cdots \ce{->[$r_1'$]}U_1\ce{->[$r_0'$]}U_0\ \ \in \Xi_0,
\]
where $r_j' =y'\ce{->}y$ if $r_j = y\ce{->}y'$ for   $0\leq j\leq q$ with   $i_0=i_{q+1}=0$, . This implies $\zeta_{r_j} = -\zeta_{r_j'}$ for all $0\leq j\leq q$, and thus $\zeta_{\gamma} = -\zeta_{\gamma'}$ with $\zeta_{\gamma} \coloneqq \sum_{j=0}^q\zeta_{r_j}$.
For any $x\in \Gamma_0$, define
\begin{align*}
x'\coloneqq x+\zeta_{\gamma} = x + \sum_{j=0}^q \zeta_{r_j} = x - \sum_{j=0}^q \zeta_{r_j'} = x-\zeta_{\gamma'}.
\end{align*}
By \eqref{def_lmd'} it is enough to verify the following equation
\begin{align}\label{eq_db}
\pi(x)\lambda_{\gamma}^* (x) = \pi (x') \lambda_{\gamma'}^* (x'),
\end{align}
provided that $x\geq \reac(\kR(\gamma))$.  Recall that $\pi$ is a detailed balanced distribution for $(\cN,\lambda)$. By definition,  
\[
\pi(x)\lambda_{r_j}(x) = \pi(x-\zeta_{r_j'})\lambda_{r_j'}(x-\zeta_{r_j'}) = \pi(x + \zeta_{r_j})\lambda_{r_j'}(x+\zeta_{r_j}).
\]
Therefore,
\begin{align*}
\pi(x)\lambda^*_{\gamma}(x)&=\pi(x)\lambda_{r_0}(x)\prod_{j=1}^{q}\frac{\lambda_{r_j}(x+\zeta_{r_0}+\dots+\zeta_{r_{j-1}})}{\sum_{j'=0}^m\sum_{r \in \cR_{i_j,j'}} \lambda_{r} (x+\zeta_{r_0}+\dots+\zeta_{r_{j-1}})}\\
&=\lambda_{r_0'}(x+\zeta_{r_0}) \pi(x + \zeta_{r_0}) \frac{\lambda_{r_1}(x+\zeta_{r_0})}{\sum_{j'=0}^m\sum_{r'\in \cR_{i_1,j'}}\lambda_{r'}(x+\zeta_{r_0})}\\
&\quad\times\prod_{j=2}^{q}\frac{\lambda_{r_j}(x+\zeta_{r_0}+\dots+\zeta_{r_{j-1}})}{\sum_{j'=0}^m\sum_{r''\in \cR_{i_j,j'}} \lambda_{r''}(x+\zeta_{r_0}+\dots+\zeta_{r_{j-1}})}\\
&=\beta_{r_0'}(x'+\zeta_{r_q'}+\dots+\zeta_{r_1'})\pi(x+\zeta_{r_0})\lambda_{r_1} (x+\zeta_{r_0})\\
&\quad \times\prod_{j=2}^{q}\frac{\lambda_{r_j}(x+\zeta_{r_0}+\dots+\zeta_{r_{j-1}})}{\sum_{j'=0}^m\sum_{r''\in\cR_{i_j,j'}}\lambda_{r''}(x+\zeta_{r_0}+\dots+\zeta_{r_{j-1}})}.
\end{align*}
Similarly, we deduce that
\begin{align*}
& \pi(x+\zeta_{r_0})\lambda_{r_1} (x+\zeta_{r_0}) \prod_{j=2}^{q}\frac{\lambda_{r_j}(x+\zeta_{r_0}+\dots+\zeta_{r_{j-1}})}{\sum_{j'=0}^m\sum_{r \in \cR_{i_j,j'}}\lambda_{r}(x+\zeta_{r_0}+\dots+\zeta_{r_{j-1}})}\\
&= \beta_{r_1'}(x'+\zeta_{r_q'}+\dots+\zeta_{r_2'})\pi(x+\zeta_{r_0}+\zeta_{r_1})\lambda_{r_2}(x+\zeta_{r_0}+\zeta_{r_1})\\
&\quad\times\prod_{j=3}^{q}\frac{\lambda_{r_j}(x+\zeta_{r_0}+\dots+\zeta_{r_{j-1}})}{\sum_{j'=0}^m\sum_{r'\in \cR_{i_j,j'}}\lambda_{r'}(x+\zeta_{r_0}+\dots+\zeta_{r_{j-1}})}
\end{align*}
Repeating the above arguments, one   ends up with the   equation,
\begin{align*}
\pi(x)\lambda^*_{\gamma}(x) = & \pi(x')\lambda_{\gamma'}^*(x')\prod_{j=1}^q \beta_{r_{j-1}'} \Big(x' + \sum_{j'=j}^{q} \zeta_{r_{j'}'} \Big) = \pi(x')\lambda_{\gamma'}^* (x').
\end{align*}
This proves  \eqref{eq_db}, and thus $\pi_0$ is a detailed balanced distribution (or linear combination of detailed balanced distributions) for $(\cN_{\cU},\lambda_{\cU})$ on $\Gamma_0$. The proof is complete.

\subsection{Proof of Lemma \ref{lmm_fnit2}}\label{app_fnit2}

It suffices to show Lemma \ref{lmm_fnit2}i) by proving statements
\begin{enumerate}
\item[A.] If Condition \ref{con_2}i) fails, then $\cX_0$ in \eqref{def_cx} is infinite for some $x\in \N_0^n$.

\item[B.] If Condition \ref{con_2}i) holds, then $\cX_0$ in \eqref{def_cx} is finite for all $x\in \N_0^n$.
\end{enumerate}

\begin{proof}[Proof of Statement A]
 Suppose Condition \ref{con_2}i) fails, then, there exists $\gamma \in \Xi_i$, such that $\kR (\gamma) = (y + U_i, y' + U_i) \in \Xi_i$ for some $U_i\in \cU_{\pro}$ and $y < y'$. Since  $U_i\in \cU_{\pro}$, then there exists a walk $\theta$ in $(\cV,\cE)$ with $\init (\theta) = U_0$ and $\ter (\theta) = U_i$. Consider the following sequence of paths
\[
\theta_q \coloneqq \theta  +  \underbrace{\gamma + \cdots + \gamma}_{\text{$q$ instances of $\gamma$}} \in \Theta_0,\qquad q\geq 1.
\]  
Let $\kR(\theta) = (y_0, y_0')$. Then, 
\begin{align*}
(z_q,z_q') \coloneqq \kR(\theta_q) = \kR(\theta) \oplus \underbrace{\kR(\gamma) \oplus \cdots \oplus \kR(\gamma)}_{\text{$q$ instances of $\kR(\gamma)$}} = (y_0,y_0') \oplus (y, y+q(y'-y)).
\end{align*}
Using \cite[Proposition 2(4)]{hoessly2021sum}, we have $z_q\leq y_0+y_1$ and $z_q' \geq y+q(y'-y)$. It follows that the set
\begin{align*}
\{x + \zeta_{\kR(\theta_q)} = y_0 + z_q' - z_q\colon q\in \N\}
\end{align*}
is an infinite subset of $\cX_0$. This completes the proof of Statement A.
\end{proof}

The proof of Statement B is based on the next two lemmas.

\begin{lemma}\label{lmm_posiseq}
Let $n$ be a positive integer and let $\{x_n\}_{n\geq 1} \subseteq \R^n$ be a sequence of vectors that is bounded from below. Then, there exists a non-decreasing sub-sequence $\{x_{n_k}\}_{k\geq 1}$ of $\{x_n\}_{n \geq 1}$. Moreover, if $x_n \neq x_m$ for all   $n\neq m$, $\{x_{n_k}\}_{k\geq 1}$ is strictly increasing. 
\end{lemma}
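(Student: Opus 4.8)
The plan is to induct on the dimension $n$, reducing everything to the scalar case; this is in effect a self-contained proof of the well-quasi-ordering of the integer lattice (Dickson's lemma) in a lower-bounded setting. I will use repeatedly that the componentwise partial order is transitive and antisymmetric, so that a subsequence of a non-decreasing sequence stays non-decreasing, and that $a<b$ together with $b<c$ forces $a<c$.

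First I would dispose of the base case $n=1$: the terms form a scalar sequence bounded below, which by the classical monotone subsequence theorem has a monotone subsequence. If that subsequence is non-decreasing we are done. If instead it is non-increasing and bounded below with entries in the integer lattice (as in every application, where $x_j\in\N_0^n$), then it can strictly decrease only finitely often, hence is eventually constant, and its constant tail is the required non-decreasing subsequence.

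For the inductive step, assuming the result in dimension $n-1$, I would split each vector as $x_j=(x_j',x_j^{(n)})$ with $x_j'\in\R^{n-1}$ its first $n-1$ coordinates. The truncated sequence $\{x_j'\}$ is bounded below, so the induction hypothesis yields a subsequence along which the first $n-1$ coordinates are non-decreasing; applying the base case to the $n$-th coordinates of this subsequence produces a further subsequence along which the last coordinate is also non-decreasing, while the first $n-1$ remain non-decreasing by transitivity. Along the final subsequence all $n$ coordinates are non-decreasing, i.e.\ it is non-decreasing in the partial order. The strict statement then comes for free: if $x_n\neq x_m$ whenever $n\neq m$, then the terms of any subsequence are pairwise distinct, so each consecutive inequality $x_{n_k}\leq x_{n_{k+1}}$ is in fact strict, and transitivity of $<$ makes the subsequence strictly increasing.

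The hard part will be the base case, and specifically the non-increasing alternative: lower-boundedness by itself does not forbid an infinite strictly decreasing real sequence (for instance $1/k$), so the argument genuinely needs the discreteness of the entries. This is why the hypothesis $\{x_j\}\subseteq\R^n$ must be understood with integer (lattice) values, which is exactly what holds wherever the lemma is invoked, where $x_j\in\N_0^n$. With that discreteness in place every non-increasing, lower-bounded run terminates in a constant tail, and the induction goes through cleanly.
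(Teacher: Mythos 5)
Your proposal is correct, but there is nothing in the paper to compare it against: the authors explicitly decline to prove this lemma, remarking only that it ``is elementary'' and giving a proof of Lemma~\ref{lmm_tc1} instead. Your induction on the dimension (a lower-bounded form of Dickson's lemma), with the scalar case settled by the monotone subsequence theorem together with discreteness, is surely the kind of argument they had in mind, and your treatment of the strict case is fine: pairwise distinct terms of a non-decreasing sequence are strictly increasing in the paper's partial order, where $x > y$ means $x \geq y$ and $x \neq y$.

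The more valuable part of your write-up is the caveat you raise, which is in fact a correction to the paper's statement. As literally written, for sequences in $\R^n$ that are merely bounded from below, the lemma is false: already in dimension one, $x_k = 1/k$ is bounded below and admits no non-decreasing subsequence, since there is no pair of indices $j < k$ with $x_j \leq x_k$. Discreteness is genuinely needed for your constant-tail argument (a non-increasing, lower-bounded sequence of integers can strictly decrease only finitely often), so the hypothesis should read $\{x_k\}_{k \geq 1} \subseteq \Z^n$ bounded from below, or equivalently $\{x_k\}_{k\geq 1} \subseteq \N_0^n$ after translation. This restriction costs nothing for the paper: every invocation of the lemma --- on the reaction vectors $\zeta_{\kR(\gamma_j)}$ and the states $y_k \in \N_0^n$ in the proof of Lemma~\ref{lmm_fnit2}, and on the vectors $v_i \in \N_0^q$ and $v_i(\cZ) \in \Z^n$ in the proof of Lemma~\ref{lmm_tc1} --- concerns integer-valued sequences bounded from below, so your corrected statement is exactly what is used.
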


\begin{lemma}\label{lmm_tc1}
Let $x\in \N_0^n$, $q$ be a positive integer, and $\cZ \coloneqq \{\zeta_1,\dots, \zeta_q\}\subseteq \Z^n\setminus \N_0^n$. Let $\cH$ consist of all linear combination of elements in $\cZ$  with non-negative integer coefficients,
\[
\cH \coloneqq \bigg\{\sum_{i=1}^q k_i \zeta_i\colon k=(k_1,\dots,k_q)\in \N_0^q\bigg\}.
\]
Then, i) $\cH_x \coloneqq \{\eta\in \cH\colon x+\eta\in \N_0^n\}$ is a finite set, if and only if ii) $\cH\cap \N_0^n\setminus\{0\}=\emptyset$.
\end{lemma}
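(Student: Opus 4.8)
The plan is to prove the two implications separately, in each case arguing by contraposition so that the combinatorial content is isolated in the harder direction. Throughout, recall that for $\eta = \sum_{i=1}^q k_i \zeta_i \in \cH$ I call $k = (k_1,\dots,k_q) \in \N_0^q$ an associated coefficient vector, and that the membership condition $x + \eta \in \N_0^n$ is simply $\eta \geq -x$ coordinatewise; so $\cH_x = \{\eta \in \cH \colon \eta \geq -x\}$ is a set of vectors bounded below by $-x$.

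First I would dispose of the implication ii)$\Rightarrow$i) through its contrapositive $\neg$ii)$\Rightarrow\neg$i). Assume there is some $\eta_0 \in \cH \cap \N_0^n \setminus \{0\}$, say $\eta_0 = \sum_i k_i \zeta_i$ with $k \in \N_0^q$. For every $m \in \N_0$ the vector $m\eta_0 = \sum_i (m k_i)\zeta_i$ again lies in $\cH$, is non-negative, and hence satisfies $x + m\eta_0 \in \N_0^n$; thus $m\eta_0 \in \cH_x$. Since $\eta_0 \neq 0$ these vectors are pairwise distinct, so $\cH_x$ is infinite, which is $\neg$i).

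The substantive direction is i)$\Rightarrow$ii), which I would again phrase contrapositively as $\neg$i)$\Rightarrow\neg$ii). Assume $\cH_x$ is infinite and list distinct elements $\eta^{(1)}, \eta^{(2)}, \dots \in \cH_x$, choosing for each an associated coefficient vector $k^{(j)} \in \N_0^q$. Form the combined vectors $w^{(j)} \coloneqq (\eta^{(j)}, k^{(j)}) \in \Z^{n+q}$. They are bounded below by $(-x,0)$ and, because the $\eta^{(j)}$ are distinct, pairwise distinct as well. Lemma \ref{lmm_posiseq} applied in dimension $n+q$ then produces a strictly increasing subsequence $w^{(j_1)} < w^{(j_2)} < \cdots$. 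Taking the first two terms $a = j_1$ and $b = j_2$, the partial-order inequality $w^{(a)} < w^{(b)}$ yields simultaneously $\eta^{(a)} \leq \eta^{(b)}$ and $k^{(a)} \leq k^{(b)}$, while distinctness of the $\eta$'s forces $\eta^{(a)} \neq \eta^{(b)}$. Setting $d \coloneqq \eta^{(b)} - \eta^{(a)}$, the value inequality gives $d \in \N_0^n \setminus \{0\}$, and the coefficient inequality gives $d = \sum_i (k_i^{(b)} - k_i^{(a)})\zeta_i \in \cH$, since all coefficient differences are non-negative. Hence $d \in \cH \cap \N_0^n \setminus \{0\}$, contradicting ii) and establishing $\neg$ii).

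The main obstacle is precisely the coupling exploited in the final step. Extracting a subsequence monotone only in the values $\eta^{(j)}$ would place $d$ in $\N_0^n$ but not necessarily in $\cH$, whereas monotonicity only in the coefficients $k^{(j)}$ would place $d$ in $\cH$ but not necessarily in $\N_0^n$; neither alone gives a contradiction. Packaging value and coefficient data into the single vector $w^{(j)} \in \Z^{n+q}$ and invoking Lemma \ref{lmm_posiseq} once secures both forms of monotonicity at the same time, which is exactly what forces $d$ into $\cH \cap \N_0^n \setminus \{0\}$. I note that the hypothesis $\cZ \subseteq \Z^n \setminus \N_0^n$ is not actually needed for this argument; it merely records the reaction-network setting in which the lemma is later applied.
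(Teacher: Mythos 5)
Your proof is correct, and on the substantive direction it takes a genuinely different route from the paper's. (One cosmetic slip first: your labels are swapped --- $\neg$ii)$\Rightarrow\neg$i) is the contrapositive of i)$\Rightarrow$ii), not of ii)$\Rightarrow$i), and vice versa; since you prove both contrapositives, both directions of the equivalence are still fully established.) Your easy direction (multiples of a nonzero non-negative element of $\cH$ form an infinite subset of $\cH_x$) coincides with the paper's. For the hard direction, the paper argues directly rather than contrapositively: assuming ii), it represents $\cH_x$ by a \emph{minimal} set $V\subseteq \N_0^q$ of coefficient vectors (minimality secured via Zorn's lemma), supposes $V$ infinite, and applies Lemma \ref{lmm_posiseq} twice in succession --- first to the coefficient vectors $\{v_i\}$, then to the values $\{v_i(\cZ)\}$, where minimality of $V$ is precisely what guarantees the values are pairwise distinct so the strictly-increasing clause of that lemma applies --- and finally takes the difference of two consecutive terms. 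You instead bundle each value with a chosen coefficient vector into a single vector $w^{(j)}=(\eta^{(j)},k^{(j)})\in\Z^{n+q}$, bounded below and pairwise distinct for free (distinctness of the $\eta^{(j)}$ suffices), so that a single application of Lemma \ref{lmm_posiseq} delivers both monotonicities simultaneously. This buys a cleaner argument: no Zorn's lemma, no minimal-representation bookkeeping, and one invocation of the auxiliary lemma instead of two, while the underlying Dickson-type idea is the same. Your closing remark is also accurate: the hypothesis $\cZ\subseteq\Z^n\setminus\N_0^n$ is used neither in your argument nor in the paper's; it only reflects the setting in which the lemma is applied.
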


\noindent
{\bf Remark.} We believe the lemma is in the literature but we have not been able to find it, hence we provide a proof.

\begin{proof}[Proof of Statement B]
Suppose Condition \ref{con_2}i) to hold. For any $\theta\in \Theta_0$, there exists a collection of closed walks $\{\gamma_i\}_{i=1}^q$  connected by paths $\{\theta_i\}_{i=1}^{q+1}$, such that
\[
\theta = \theta_1 + \gamma_1 + \theta_2 + \dots + \gamma_{q} + \theta_{q+1},
\]
where $\gamma_i\in \Xi_{j_i}$ with  $j_i\in \{1,\dots, m_{\pro}\}$, $i=1,\dots, q$, such that $j_i\neq j_{i'}$ for all $i\neq i'$. Then, $q\leq m_{\pro}$. As the number of paths   in $(\cV,\cE)$ is finite, it suffices to show that the set
\begin{align*}
A_i(x)\coloneqq \{x + \zeta_{\kR(\gamma)} \colon \gamma \in \Xi_i, \reac (\kR(\gamma)) \leq x\}
\end{align*}
is finite for every $x\in \N_0^n$ and $i\in \{1,\dots, m_{\pro}\}$. For any $i\in\cC$, let
\[
A_{i,k}(x) = \{ x + \zeta_{\kR(\gamma)} \colon \gamma \in \Xi_i, x\geq \reac (\kR(\gamma))\}, |\cC_{\gamma}| = k \},
\]
with $k\in\{1,\dots, m_{\pro}\}$ and $\cC_{\gamma} = \{U_i\in \cU \colon U_i\in\gamma\}$.
 Then, $A_i(x) = \cup_{k=1}^{m_{\pro}} A_{i,k} (x)$, $i\in \{1,\dots, m_{\pro}\}$. We aim to show that $A_{i,k} (x)$ is a finite set for all $i\in \{1,\dots, m_{\pro}\}$, $1\leq k\leq m_{\pro}$ and $x\in \N_0^n$ by induction in $k$.

Fix any $i_0\in \{1,\dots, m_{\pro}\}$. Note that $\{\Gamma\in \Xi_{i_0}\colon |\cC_{\gamma}| = 1\} = \cR_{i_0,i_0}$ is a finite set. Thus, due to Condition \ref{con_2}i) and Lemma \ref{lmm_tc1},   $A_{i_0,1}(x)$ is finite. For $k\geq 2$, we show  that $A_{i_0,k}(x)$ is finite by contradiction. Suppose $A_{i_0,k}(x)$ is not a finite set. Then, there exists a sequence of closed walks $\{\gamma_j\}_{j\geq 1} \subseteq \Xi_{i}$, such that
\begin{enumerate}
\item[a)]  $|\cC_{\gamma_j}| = k$, $x\geq \reac (\kR (\gamma_j))$ for all $i\in \N$,

\item[b)] $\{x + \zeta_{\kR(\gamma_j)}\}_{j\geq 1} \subseteq A_{i,k}$ is an infinite subset of $\N_0^n$,

\item[c)] $ \zeta_{\kR (\gamma_j)} \neq \zeta_{\kR (\gamma_{j'})}$ for all indexes $j\neq j'$. 
\end{enumerate}

Using Lemma \ref{lmm_posiseq}, we can assume that $\{\zeta_{\kR(\gamma_j)}\}_{j\geq 1}$ is a strictly increasing sequence. Next, for every $j\in \N$,   $\gamma_j$ can be decomposed as $\gamma_j = \gamma_{j,1} + \dots + \gamma_{j,q_j}$, such that for   $j'=1,\dots, q_j$, $\gamma_{j,j'}\in \Xi_{i_0}$ and $U_{i_0}$ only appears as the initial and terminal nodes  of $\gamma_{j,j'}$. 
Assume that 
\begin{align}\label{ass_neq}
\sum_{j'=k_1}^{k_2} \zeta_{\kR(\gamma_{j,j'})} \neq 0,\quad \forall 1\leq k_1\leq k_2\leq q_i.
\end{align}
 Otherwise, we can replace $\gamma_j$ by $\gamma_j' = \gamma_{j,1} + \dots + \gamma_{j,k_1 - 1} + \gamma_{j,k_2 + 1}$. Then, $\reac (\gamma_j') \leq \reac(\gamma_j) \leq x$ and $\zeta_{\kR(\gamma_j')} = \zeta_{\kR(\gamma_j)}$. Thus, with this substitution, properties a)--c) still hold. 

For any $i\in \N$, $\gamma_{i,1}$ can be decomposed into at most $k-1$ closed walks connected by paths. Due to the induction hypothesis, for any $x\in\N_0^n$, the set $\{x + \zeta_{\kR(\gamma_{i,1})}\}_{i\geq 1}$ is finite. Thus, for any positive integer $q$ and any index $j$, $\{x + \zeta_{\kR(\gamma_{j,1})} + \dots + \zeta_{\kR(\gamma_{j,j'})}\}_{j'=1}^{q_j\wedge q}$ is a finite set. Therefore, if $\{q_j\}_{j\geq 1}$ is finite, then so is $\{x + \zeta_{\kR(\gamma_j)}\}_{j\geq 1}$. This contradicts b). Thus, by taking sub-sequences, if necessary, we can assume that $\{q_j\}_{j\geq 1}$ is a strictly increasing sequence of positive integers.
Denote 
\[
\eta_{j,j'} \coloneqq x + \zeta_{\kR(\gamma_{j,1})} + \dots + \zeta_{\kR(\gamma_{j,j'})},
\]
for all $j\in \N$ and $j' \in \{1,\dots, q_j\}$. It follows from a) that $\eta_{j,j'} \in \N_0^n$. 
Recall that $\{\eta_{j,1} \}_{j\geq 1} = \{x + \zeta_{\kR(\gamma_{j,1})}\}_{j\geq 1}$ is a finite set. There exist a sub-sequence $\{\eta_{n_{j}^1,1}\}_{j\geq 1}$ of $\{\eta_{j,1}\}_{j\geq 1}$ such that $\eta_{n_j^1,1}  = y_1\in\N_0^n$ for all indexes $j'$. Iteratively, for any $k\geq 2$, there exists a sub-sequence
$\{\eta_{n_j^k,k} \}_{j\geq 1}$ of $\{\eta_{n_j^{k-1},k} \}_{j\geq 1}$,
such that 
$\eta_{n_j^k,k} = y_{k}\in\N_0^n$ 
for all  $j\in \N$. Concerning Condition \ref{con_2}i),  the sequence $\{y_k\}_{k\geq 1}\subseteq \N_0^n$    satisfies the property that 
 \begin{align}\label{yk_nincr}
y_{k} - y_{k'} = \eta_{n_{k}^{k}, k} - \eta_{n_{k}^{k}, k'} = \sum_{j=k'+1}^{k} \zeta_{\kR(\gamma_{n_{k}^k, j})} \notin \N_0^n\setminus\{0\},\quad \forall k > k' \geq 1.
\end{align}
On the other hand, as a result of assumption \eqref{ass_neq}, we have $y_{k} \neq y_{k'}$ for all $k\neq k'$. Thus, according to Lemma \ref{lmm_posiseq}, there exists a strictly increasing sub-sequence of $\{y_k\}_{k\geq 1}$, which contradicts  \eqref{yk_nincr}. This proves that  $A_{i,k}(x)$ is a finite set for all $i\in \{1,\dots, m_{\pro}\}$ and $1\leq k\leq m_{\pro}$. Therefore, $A_i(x) = \cup_{k=1}^{m_{\pro}} A_{i,k}(x)$ is also finite, and so is $\cX_0$. The proof of Statement B is complete.
\end{proof}

Now it suffices to show Lemmas \ref{lmm_posiseq} and \ref{lmm_tc1}. Note that Lemma \ref{lmm_posiseq} is elementary, therefore we only provide the proof of Lemma \ref{lmm_tc1}.

\begin{proof}[Proof of Lemma \ref{lmm_tc1}]
i)$\implies$ii) is straightforward: If ii) fails, then there exists $\eta\in \cH\cap \N_0^n\setminus\{0\}$. This implies, $\cG \coloneqq \{j\eta\colon j\in \N\}$ is an infinite subset of $\cH_x$, which contradicts i).

Conversely, suppose ii) holds. Let $V\subseteq \N_0^q$ be a set such that $\cH_{x,V} \coloneqq \{x+v(\cZ) \colon v\in V\}=\cH_x$.
By Zorn's lemma, we can assume that $V$ is minimal, namely, for any $v\in V$, with $V'=V\setminus \{v\}$, it holds that $\cH_{x,V'}\neq \cH_x$. To prove $H_x$ is finite, it suffices to show that $V$ is a finite set. Suppose that $V$ is an infinite set. Then, there is  a sequence $\{v_i\}_{i \geq 1} \subseteq V$, such that $v_i\neq v_j$ for any $i\neq j$. 
Concerning Lemma \ref{lmm_posiseq}, by taking sub-sequence, we can assume that $\{v_i\}_{i\geq 1}$ is strictly increasing.

For any $\eta\in \cH_x$, we have $\eta \geq -x$. Thus, $\{v_i(\cZ)\}_{i\geq 1}\subseteq \cH_x$ is bounded from  below.  Due to the minimality of $V$, we have $v_i(\cZ)\neq v_j(\cZ)$, whenever $i\neq j$, and thus by Lemma \ref{lmm_posiseq}, assume  $\{v_i(\cZ)\}_{i\geq 1}$ is strictly increasing. Because $\{v_i\}_{i\geq 1}$ is strictly increasing, it follows that $v_0\coloneqq v_{2}-v_{1} \in \N_0^{q,*}$, and
$ v_0(\cZ)=v_{2}(\cZ)-v_{1}(\cZ) \in \cH\cap \N_0^n\setminus\{0\}$.
This contradicts ii).  The proof is complete.
\end{proof}

\end{document}